\theoremstyle{plain}
\newtheorem{thm}{Theorem}[section]
\newtheorem{pro}[thm]{Proposition}
\newtheorem{lem}[thm]{Lemma}
\theoremstyle{definition}
\newtheorem{dfn}[thm]{Definition}
\newtheorem{rem}[thm]{Remark}
\newtheorem*{notation}{Notation}
\newcommand{\N}{\mathbb{N}}
\newcommand{\R}{\mathbb{R}}
\newcommand{\Q}{\mathbb{Q}}
\DeclareMathOperator{\sB}{\mathbf{B}}
\DeclareMathOperator{\Exc}{Exc}
\DeclareMathOperator{\mult}{mult}
\DeclareMathOperator{\Supp}{Supp}
\DeclareMathOperator{\nklt}{nklt}
\begin{document}
	\title[Special termination for log canonical pairs]{Special termination for log canonical pairs}

	\author{Vladimir Lazi\'c}
	\address{Fachrichtung Mathematik, Campus, Geb\"aude E2.4, Universit\"at des Saarlandes, 66123 Saarbr\"ucken, Germany}
	\email{lazic@math.uni-sb.de}
	
	\author{Joaqu\'in Moraga}
	\address{UCLA Mathematics Department, Box 951555, Los Angeles, CA 90095-1555, USA}
	\email{jmoraga@math.ucla.edu}

	\author{Nikolaos Tsakanikas}
	\address{EPFL SB MATH CAG, MA C3 595 (B\^atiment MA), Station 8, 1015 Lausanne, Switzerland}
	\email{nikolaos.tsakanikas@epfl.ch}
	
	\thanks{
		Lazi\'c was supported by the DFG-Emmy-Noether-Nachwuchsgruppe ``Gute Strukturen in der h\"oherdimensionalen birationalen Geometrie". We would like to thank O.\ Fujino and K.\ Hashizume for useful discussions related to this work.
		\newline
		\indent 2010 \emph{Mathematics Subject Classification}: 14E30.\newline
		\indent \emph{Keywords}: Minimal Model Program, termination of flips, special termination.
	}
	
\begin{abstract}
We prove the special termination for log canonical pairs and its generalisation in the context of generalised pairs.
\end{abstract}
	
	\maketitle
	\setcounter{tocdepth}{1}
	\tableofcontents

\section{Introduction}

The first goal of this paper is to give a rigorous and complete proof of the following result, which is a natural step towards proving one of the big open problems in the Minimal Model Program (MMP) in characteristic zero -- the termination of flips.

\begin{thm}\label{thm:main}
	Assume the termination of flips for $ \Q $-factorial klt pairs of dimension at most $ n-1 $.
	
	Let $(X_1,B_1)$ be a quasi-projective log canonical pair of dimension $n$ which is projective over a normal quasi-projective variety $Z$. Consider a sequence of flips over $Z$:
	\begin{center}
		\begin{tikzcd}[column sep = 2em, row sep = 2.25em]
			(X_1,B_1) \arrow[dr, "\theta_1" swap] \arrow[rr, dashed, "\pi_1"] && (X_2,B_2) \arrow[dl, "\theta_1^+"] \arrow[dr, "\theta_2" swap] \arrow[rr, dashed, "\pi_2"] && (X_3,B_3) \arrow[dl, "\theta_2^+"] \arrow[rr, dashed, "\pi_3"] && \dots \\
			& Z_1 && Z_2
		\end{tikzcd}
	\end{center}
	Then there exists a positive integer $N$ such that
	\[ \Exc(\theta_i)\cap\nklt(X_i,B_i)=\emptyset \text{ for all } i\geq N . \]
\end{thm}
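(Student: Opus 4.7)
The strategy is to reduce the lc case to the classical special termination for $\Q$-factorial dlt pairs by constructing, along the given sequence of flips, a compatibly chosen sequence of $\Q$-factorial dlt modifications. Since special termination in the dlt setting is known to hold under the hypothesis of the theorem, the main work is to transport the problem from the lc to the dlt setting while keeping track of the non-klt locus.

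First I would fix a $\Q$-factorial dlt modification $\phi_1\colon (Y_1,B_{Y_1})\to(X_1,B_1)$ with $K_{Y_1}+B_{Y_1}=\phi_1^*(K_{X_1}+B_1)$, so that in particular $\nklt(X_1,B_1)=\phi_1(\lfloor B_{Y_1}\rfloor)$. Inductively, given $\phi_i\colon (Y_i,B_{Y_i})\to(X_i,B_i)$, I would lift the flip $\pi_i$ by running a $(K_{Y_i}+B_{Y_i})$-MMP over $Z_i$ with scaling of a suitable ample divisor, terminating at a model $\phi_{i+1}\colon (Y_{i+1},B_{Y_{i+1}})\to(X_{i+1},B_{i+1})$ which is again a $\Q$-factorial dlt modification, together with the connecting birational map $\psi_i\colon Y_i\dashrightarrow Y_{i+1}$.

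The main obstacle is ensuring that this relative MMP exists, terminates, and actually produces a dlt modification of $(X_{i+1},B_{i+1})$ at its end. Although $K_{Y_i}+B_{Y_i}$ is $(\theta_i\circ\phi_i)$-antinef, the pullback of $K_{X_{i+1}}+B_{i+1}$ to a common resolution provides a relative log minimal model over $Z_i$, so a minimal model exists; appealing to termination-with-scaling in the presence of a minimal model, one can then run a terminating relative MMP. Each step of $\psi_i$ is either a flip or a divisorial contraction, the latter occurring only finitely many times in total as they strictly drop the Picard number of the ambient threefold of birational models, so beyond some index the concatenated sequence $\psi_1,\psi_2,\dots$ consists purely of flips of the $\Q$-factorial dlt pair.

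Finally, I would apply the classical special termination for $\Q$-factorial dlt pairs to the concatenated MMP $Y_1\dashrightarrow Y_2\dashrightarrow\cdots$. Under our hypothesis this produces an index $N$ beyond which the exceptional locus of every step of the MMP is disjoint from $\lfloor B_{Y_i}\rfloor$. Since $\phi_i$ is crepant and $\phi_i(\lfloor B_{Y_i}\rfloor)=\nklt(X_i,B_i)$, pushing down to $X_i$ and using that the flipping locus of $\theta_i$ is covered by exceptional loci of the steps of $\psi_i$ immediately yields $\Exc(\theta_i)\cap\nklt(X_i,B_i)=\emptyset$ for all $i\geq N$, as required.
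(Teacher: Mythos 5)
Your overall strategy coincides with the paper's: lift the sequence of lc flips to an MMP on $\Q$-factorial dlt modifications (the paper's Lemma \ref{lem:lifting}), invoke dlt special termination (the paper reduces the klt hypothesis to the dlt one via Lemma \ref{lem:klt_lc} and then cites \cite[Theorem 4.2.1]{Fuj07a}), and push the conclusion down to $X_i$.

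The one place where your argument is materially thinner than the paper's is the last step. You write that ``the flipping locus of $\theta_i$ is covered by exceptional loci of the steps of $\psi_i$'' and conclude from there. Reading this as the claim $f_i^{-1}(\Exc(\theta_i))\subseteq\{\psi_i\text{ is not an isomorphism}\}$, your deduction of $\Exc(\theta_i)\cap\nklt(X_i,B_i)=\emptyset$ is logically correct, but the covering claim itself is not a formal consequence of what precedes it --- it is exactly what the paper's technical Lemma \ref{lem:technical} proves. The argument there goes through the relative stable base locus: because $\theta_i$ is a flipping contraction, $\Exc(\theta_i)\subseteq\sB\big((K_{X_i}+B_i)/Z_i\big)$; pulling back along the crepant $f_i$ (Lemma \ref{lem:baselocus}) one gets $f_i^{-1}(\Exc(\theta_i))\subseteq\sB\big((K_{Y_i}+\Delta_i)/Z_i\big)$; and since $\psi_i$ is $(K_{Y_i}+\Delta_i)$-nonpositive and terminates at a model with $K_{Y_{i+1}}+\Delta_{i+1}$ semiample over $Z_i$, a Negativity-Lemma argument shows that this stable base locus cannot meet any open set on which $\psi_i$ is an isomorphism. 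That whole chain is what justifies your sentence, and you should supply it (or at least cite the relevant comparison between the non-isomorphism locus of a run of the MMP and the stable base locus). Two minor points: the phrase ``ambient threefold of birational models'' is a typo, and the finiteness of divisorial contractions across the concatenated sequence uses that all $Y_i$ are $\Q$-factorial with Picard number bounded below by the common $\rho(X_i)$ --- worth stating, though the paper simply relabels to assume the concatenated MMP is a sequence of flips.
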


The result is usually referred to as \emph{special termination for log canonical pairs}; in other words, in any sequence of flips of a log canonical pair, the flipping (and thus also the flipped) locus will avoid the non-klt locus of the pair.

The theorem has its origins in \cite{Sho92} and was stated in this form in \cite{Sho03,Sho04}; however only a sketch of a proof in a special case is given. On the state of the art, see comments in \cite[Section 4.2]{Fuj07a} and \cite[Section 5]{Fuj11}.

The only complete proof of special termination is in \cite{Fuj07a} for \emph{dlt pairs}; in that case, special termination says that the flipping locus is eventually disjoint from the round-down of the boundary of the dlt pair. Even though that statement suffices in many applications, it seems that the generalisation to log canonical pairs is necessary if one wants to attack the termination of flips. In particular, special termination as in Theorem \ref{thm:main} was used in \cite{Bir07}.

Our second goal is to prove a form of special termination in the context of g-pairs. This is a recently introduced category which includes the usual pairs, see Section \ref{Sec:prelim} for details. 

\begin{thm}\label{thm:main_g}
	Assume the termination of flips for NQC $ \Q $-factorial klt g-pairs of dimension at most $ n-1 $. 
	
	Let $(X_1,B_1+M_1)$ be a quasi-projective NQC log canonical g-pair of dimension $n$ which is projective over a normal quasi-projective variety $Z$. Consider a sequence of flips over $Z$:
	\begin{center}
		\begin{tikzcd}[column sep = 0.8em, row sep = 1.75em]
			(X_1,B_1+M_1) \arrow[dr, "\theta_1" swap] \arrow[rr, dashed, "\pi_1"] && (X_2,B_2+M_2) \arrow[dl, "\theta_1^+"] \arrow[dr, "\theta_2" swap] \arrow[rr, dashed, "\pi_2"] && (X_3,B_3+M_3) \arrow[dl, "\theta_2^+"] \arrow[rr, dashed, "\pi_3"] && \dots \\
			& Z_1 && Z_2
		\end{tikzcd}
	\end{center}
	Then there exists a positive integer $N$ such that
	\[ \Exc(\theta_i)\cap\nklt(X_i,B_i+M_i)=\emptyset \text{ for all } i\geq N . \]
\end{thm}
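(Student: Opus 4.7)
My plan is to reduce Theorem~\ref{thm:main_g} to a dlt version of special termination by passing to a $\Q$-factorial NQC dlt modification, and then to prove the dlt statement by induction on the dimension of log canonical centres, in the spirit of Fujino's proof of special termination for dlt pairs.

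\textbf{Dlt reduction and lifting.} I would first take a $\Q$-factorial NQC dlt modification $\phi_1\colon (Y_1, B_{Y_1} + M_1) \to (X_1, B_1 + M_1)$, so that $K_{Y_1} + B_{Y_1} + M_1 = \phi_1^*(K_{X_1} + B_1 + M_1)$ and $\nklt(X_1, B_1 + M_1) = \phi_1\bigl(\Supp\lfloor B_{Y_1}\rfloor\bigr)$. Inductively, each flip $\pi_i\colon X_i \dashrightarrow X_{i+1}$ should lift to a finite sequence of $(K_Y + B_Y + M)$-MMP steps from $Y_i$ to a $\Q$-factorial NQC dlt modification $Y_{i+1}$ of $X_{i+1}$: one runs the $(K_{Y_i} + B_{Y_i} + M_i)$-MMP relatively over $Z_i$, which is $(\theta_i \circ \phi_i)$-negative by the pullback formula, and then contracts any remaining $\phi_{i+1}$-exceptional divisors. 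This yields an infinite sequence of MMP steps of $\Q$-factorial NQC dlt g-pairs, and it suffices to show that it eventually avoids $\Supp\lfloor B_{Y_i}\rfloor$; pushing forward via $\phi_i$ then gives the desired conclusion on $(X_i, B_i + M_i)$.

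\textbf{Special termination for dlt g-pairs.} I would induct on the dimension $d$ of lc centres. For the inductive step, fix a $d$-dimensional lc centre $S$ of $(Y_i, B_{Y_i} + M_i)$; generalised adjunction produces on the normalisation $S^{\nu}$ an NQC dlt g-pair of dimension $d$, which becomes klt after removing the codimension-one boundary components arising from other strata meeting $S$. The MMP steps that meet $S$ restrict to a sequence of birational maps on the corresponding strict transforms that constitute a partial MMP for this restricted g-pair. The termination hypothesis in dimension $\leq n-1$ then forces the restricted sequence to terminate, so $S$ is eventually not contained in the flipping locus. Combined with the finiteness of the set of lc centres of each dimension and with the fact that no new lc centres appear along a sequence of flips, this shows that after finitely many steps $\Exc(\theta_i) \cap \Supp\lfloor B_{Y_i}\rfloor = \emptyset$. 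The base case $d = 0$ is handled directly, since an isolated lc centre lying in the flipping locus can only be moved finitely many times.

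\textbf{Main obstacle.} The principal difficulty is the lifting step: constructing the $\Q$-factorial NQC dlt modification, ensuring that the relative $(K_Y + B_Y + M)$-MMP over $Z_i$ terminates (without circular dependence on our hypothesis), and establishing that its output is itself a dlt modification of $(X_{i+1}, B_{i+1} + M_{i+1})$ --- all for NQC g-pairs, where results of this kind are more recent and more delicate than in the classical setting. A secondary subtlety is that generalised adjunction for g-pairs carries an extra contribution to the moduli part on each lc centre, and one must verify carefully that the restricted g-pair is NQC and, after suitable truncation of the boundary, klt, so that the termination hypothesis genuinely applies to it.
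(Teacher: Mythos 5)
Your overall plan --- pass to a $\Q$-factorial NQC dlt modification, lift the sequence of flips to an MMP for dlt g-pairs, prove special termination for dlt g-pairs by induction on the dimension of lc centres, and transfer the conclusion back --- is the same as the paper's (Lemma \ref{lem:lifting_g}, Theorem \ref{thm:specterm_g-pairs}, Lemma \ref{lem:technical}). However, your proposal leaves three genuine gaps unaddressed. First, the \emph{difficulty function is entirely absent.} When you restrict the MMP to (strict transforms of) a $d$-dimensional lc centre $S$, the induced maps $S_i\dashrightarrow S_{i+1}$ are not a priori small: the ambient flip can contract or extract divisors of $S_i$. The paper introduces a difficulty invariant (Definition \ref{dfn:difficulty}), proves it is finite (Lemma \ref{lem:finite}), non-increasing, and strictly decreasing whenever the restriction is not an isomorphism in codimension one (Proposition \ref{pro:monoton}); only after this can you even regard the restricted sequence as a sequence of small quasi-flips that can be lifted again (another application of Lemma \ref{lem:lifting_g}). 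Without this invariant the inductive step collapses.

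Second, your klt-to-dlt upgrade is mis-ordered. You propose to "remove the codimension-one boundary components" and then invoke the klt termination hypothesis directly on the restricted g-pair, but the restricted maps are MMP steps for the \emph{dlt} g-pair $(S,B_S+M_S)$, not for its klt truncation; the two MMPs only coincide once you already know the flipping locus avoids $\Supp\lfloor B_S\rfloor$, which is precisely the lower-dimensional special termination statement you are trying to prove. The paper resolves this with a separate induction (Lemma \ref{lem:reduction_g-term}) which bootstraps klt termination in dimension $\leq n-1$ to lc/dlt termination in the same dimensions, and feeds the \emph{dlt} version into Theorem \ref{thm:specterm_g-pairs}. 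Third, "pushing forward via $\phi_i$" at the end is not automatic. A flipping curve through $x\in\nklt(X_i,B_i+M_i)$ lifts to a curve in $Y_i$ that may entirely miss $\Supp\lfloor\Delta_i\rfloor$, so knowing the lifted MMP eventually avoids $\Supp\lfloor\Delta_i\rfloor$ does not directly imply the conclusion on $X_i$. The paper needs the dedicated stable-base-locus argument of Lemma \ref{lem:technical}, showing the whole fibre $f_i^{-1}(x)$ lies in $\sB\big((K_{Y_i}+\Delta_i+N_i)/Z_i\big)$ and deriving a contradiction with semiampleness on the minimal model; this step must be argued, not assumed.
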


As demonstrated in recent papers \cite{Mor18,HL22,HM20,LT22}, understanding the Minimal Model Program of g-pairs is indispensable even if one is interested only in results involving the usual pairs or even only varieties. We expect Theorem \ref{thm:main_g} to play a prominent role in future developments on the existence of minimal models and the termination of flips.

\section{Preliminaries}\label{Sec:prelim}

Throughout the paper we work over an algebraically closed field of cha\-rac\-te\-ris\-tic zero. All morphisms are projective.

Given a projective morphism $f\colon X\to Z $ between normal varieties and two $ \R $-Cartier divisors $ D_1 $ and $ D_2 $ on $ X $, we say that $ D_1 $ and $ D_2 $ are \emph{numerically equivalent over $ Z $}, denoted by $ D_1 \equiv_Z D_2 $, if $ D_1\cdot C = D_2 \cdot C $ for any curve $ C $ contained in a fibre of $ f $. We say that $ D_1 $ and $ D_2 $ are \emph{$\R$-linearly equivalent over $ Z $}, denoted by $ D_1 \sim_{\R,Z} D_2 $, if there exists an $\R$-Cartier $\R$-divisor $G$ on $Z$ such that $D_1\sim_\R D_2+f^*G$.

An $\R$-divisor $D$ on a variety $X$ over $Z$ is an \emph{NQC divisor} if it is a non-negative linear combination of $\Q$-Cartier divisors on $X$ which are nef over $ Z $. The acronym NQC stands for \emph{nef $\Q$-Cartier combinations}.

\begin{dfn}
	Let $X$ be a normal variety which is projective over a normal variety $Z$ and let $ D $ be an $ \R $-Cartier divisor on $ X $. An \emph{NQC weak Zariski decomposition} of $ D $ over $ Z $ consists of a projective birational morphism $ f \colon W \to X $ from a normal variety $ W $ and a numerical equivalence $ f^* D \equiv_Z P + N $ such that $ P $ is an NQC divisor and $ N $ is an effective $\R$-Cartier divisor on $W$. 
\end{dfn}

\begin{dfn}
	Let $X$ and $Y$ be normal varieties and let $\varphi \colon X\dashrightarrow Y$ be a \emph{birational contraction}, i.e.\ the map $ \varphi^{-1} $ contracts no divisors. Let $D$ be an $\R$-Cartier $\R$-divisor on $X$ and assume that $\varphi_*D$ is $\R$-Cartier. Then $\varphi$ is \emph{$D$-nonpositive} if there exists a smooth resolution of indeterminacies $(p,q)\colon W\to X\times Y$ of $\varphi$ such that 
	$$ p^*D\sim_\R q^* \varphi_*D + E,$$
	where $E$ is an effective $q$-exceptional $\R$-Cartier divisor on $W$.
\end{dfn}

\subsection{The relative stable base locus}

Let $ X \to Z $ be a projective morphism of normal varieties and let $ D $ be an $ \R $-divisor on $ X $. The \emph{$ \R $-linear system} associated with $ D $ over $ Z $ is 
		\[ |D/Z|_\R := \{ G \geq0 \mid G \sim_{\R,Z} D \} , \]
and the \emph{stable base locus} of $ D $ over $ Z $ is defined as
		\[ \sB(D) := \bigcap_{E \in |D/Z|_\R} \Supp E . \]

\begin{lem}\label{lem:baselocus}
	Let $f\colon Y\to X$ and $g\colon X\to Z$ be projective birational morphisms between normal varieties. Let $D$ be an $\R$-Cartier $\R$-divisor on $X$. Then $\sB(f^*D/Z)=f^{-1}\big(\sB(D/Z)\big)$.
\end{lem}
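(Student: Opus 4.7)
The plan is to show that every element $E' \in |f^*D/Z|_\R$ is actually of the form $f^*A$ for some $A \in |D/Z|_\R$, after which the lemma is immediate from the fact that $\Supp(f^*A) = f^{-1}(\Supp A)$ for any effective $\R$-Cartier divisor $A$ on $X$.

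Given such an $E'$, unwinding the definition of $\sim_{\R,Z}$ gives
\[ E' = f^*D + (g \circ f)^*G + \sum_i r_i (h_i)_Y \]
for some $\R$-Cartier divisor $G$ on $Z$, real numbers $r_i$, and rational functions $h_i$ on $Y$. Since $f$ is birational between normal varieties, the function fields $K(Y)$ and $K(X)$ are canonically identified, and one has the standard identities $f_*f^*D = D$, $f_*(g\circ f)^*G = g^*G$ and $f_*(h_i)_Y = (h_i)_X$. Pushing the displayed equation forward through $f$ therefore yields
\[ A := f_*E' = D + g^*G + \sum_i r_i (h_i)_X, \]
which exhibits $A$ as an effective $\R$-Cartier divisor with $A \sim_{\R,Z} D$, so $A \in |D/Z|_\R$. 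Pulling back, and using the companion identity $f^*(h_i)_X = (h_i)_Y$, recovers the displayed expression for $E'$, so $E' = f^*A$.

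Combined with the trivial fact that $A \mapsto f^*A$ sends $|D/Z|_\R$ into $|f^*D/Z|_\R$ (effectiveness and $\R$-linear equivalence over $Z$ are preserved under pullback), we conclude that $f^*$ and $f_*$ induce inverse bijections between the two $\R$-linear systems. Using that $\Supp(f^*A) = f^{-1}(\Supp A)$ for every $A \in |D/Z|_\R$, we then obtain
\[ \sB(f^*D/Z) = \bigcap_{A \in |D/Z|_\R} \Supp(f^*A) = f^{-1}\Big(\bigcap_{A \in |D/Z|_\R} \Supp A\Big) = f^{-1}(\sB(D/Z)). \]

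The main subtlety is the observation that every representative of $|f^*D/Z|_\R$ is literally a pullback, which in turn rests on the formal compatibilities $f_*(h)_Y = (h)_X$ and $f^*(h)_X = (h)_Y$ for rational functions $h \in K(X) = K(Y)$; these are standard consequences of $f$ being a birational morphism between normal varieties, after which the proof reduces to the direct calculation above.
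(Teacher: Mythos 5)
Your proof is correct, and it establishes the same key identity as the paper, namely $f^*|D/Z|_\R = |f^*D/Z|_\R$; the substantive direction in both is that every $E' \in |f^*D/Z|_\R$ satisfies $E' = f^*f_*E'$, i.e.\ is literally a pullback. The two proofs of this differ. The paper writes $f^*f_*E' = E' + E$ with $E$ automatically $f$-exceptional, observes that $E' \sim_\R f^*(D + g^*L)$ forces $E \sim_\R f^*(f_*E' - D - g^*L)$ to be numerically $f$-trivial, and then invokes the Negativity Lemma to conclude $E = 0$. You instead unwind $\sim_{\R,Z}$ into a literal sum $E' = f^*D + (g\circ f)^*G + \sum r_i (h_i)_Y$, note that each term is the pullback of a divisor on $X$ (using $(g\circ f)^*G = f^*g^*G$ and $(h_i)_Y = f^*(h_i)_X$, which hold because $K(X) = K(Y)$ via the birational morphism $f$), and read off $E' = f^*(f_*E')$ directly. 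Your route bypasses the Negativity Lemma entirely, at the cost of unwinding $\sim_\R$ to a concrete representation as a sum of principal divisors; the paper's route is more compact but relies on that standard auxiliary result. Both are valid, and the bijection between $\R$-linear systems that each argument produces then yields the statement about stable base loci in the same way.
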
	

\begin{proof}
	It suffices to show that $f^*|D/Z|_\R=|f^*D/Z|_\R$. It is clear that $f^*|D/Z|_\R\subseteq |f^*D/Z|_\R$. For the converse inclusion, let $ G \in | f^*D/Z |_\R $. We may write $ f^* f_* G = G + E $, where $ E $ is $ f $-exceptional. There exists an $ \R $-Cartier $\R$-divisor $ L $ on $ Z $ such that
	\[ G \sim_\R f^*D + (g \circ f)^*L = f^* ( D+g^*L ) , \]
	and thus
	\[ E \sim_\R f^* ( f_*G - D - g^*L ) . \]
	Therefore $ E = 0 $ by the Negativity Lemma \cite[Lemma 3.39(1)]{KM98}, and consequently $ f^* f_* G = G $. This proves that $f^*|D/Z|_\R\supseteq |f^*D/Z|_\R$ and completes the proof.
\end{proof}

\subsection{Generalised pairs}

For the definitions and basic results on the singularities of pairs and the MMP we refer to \cite{KM98}. Below we discuss briefly generalised pairs, abbreviated as g-pairs; for futher information we refer to \cite[Section 4]{BZ16}, and in particular to \cite[\S 2.1 and \S 3.1]{HL22} for properties of dlt g-pairs. 

\begin{dfn}
	A \emph{generalised pair} or \emph{g-pair} $(X,B+M)$ consists of a normal variety $ X $ equipped with  projective morphisms 
	$$  X' \overset{f}{\longrightarrow} X \longrightarrow Z , $$ 
	where $ f $ is birational and $ X' $ is normal, $B$ is an effective $ \R $-divisor on $X$, and $M'$ is an $\R$-Cartier divisor on $X'$ which is nef over $Z $ such that $ f_* M' = M $ and $ K_X + B + M $ is $ \R $-Cartier. We say often that the g-pair $(X,B+M)$ is given by the data $X'\to X\to Z$ and $M'$.
	
	Moreover, if $M'$ is an NQC divisor on $ X' $, then the g-pair $(X/Z,B+M)$ is an \emph{NQC g-pair}.
	
	Finally, we say that a g-pair $ (X,B+M) $ admits an NQC weak Zariski decomposition over $ Z $ if the divisor $ K_X + B + M $ has an NQC weak Zariski decomposition over $ Z $.
\end{dfn}

For simplicity, we denote such a g-pair only by $ (X/Z,B+M) $, but we implicitly remember the whole g-pair structure. Additionally, we note that the definition is flexible with respect to $X'$ and $M'$: if $ g \colon Y \to X' $ is a projective birational morphism from a normal variety $ Y $, then we may replace $X'$ with $Y$ and $M'$ with $ g^*M'$. Hence, we may always assume that $f \colon X'\to X$ in the above definition is a sufficiently high birational model of $ X $. 

\begin{dfn}
	Let $ (X,B+M) $ be a g-pair with data $ X' \overset{f}{\to} X \to Z $ and $ M' $. We can then write 
	$$ K_{X'} + B' + M' \sim_\R f^* ( K_X + B + M ) $$
	for some $ \R $-divisor $ B' $ on $ X' $. Let $ E $ be a divisorial valuation over $ X $ which is a prime divisor on $ X' $; its centre on $X$ is denoted by $c_X(E)$. The \emph{discrepancy of $ E $} with respect to $ (X,B+M) $ is $ a (E, X, B+M) := {-} \mult_E B' $.
	The g-pair $ (X,B+M) $ is: 
	\begin{enumerate}
		\item[(a)] \emph{klt} if $a (E, X, B+M) > -1 $ for all divisorial valuations $E$ over $X$,
		\item[(b)] \emph{log canonical} if $a (E, X, B+M) \geq -1 $ for all divisorial valuations $E$ over $X$,
		\item[(c)] \emph{dlt} if it is log canonical, if there exists an open subset $U\subseteq X$ such that the pair $(U,B|_U)$ is log smooth, and if $a(E,X,B+M) = {-}1$ for some divisorial valuation $E$ over $X$, then the set $c_X(E)\cap U$ is non-empty and it is a log canonical centre of $(U,B|_U)$.
	\end{enumerate}
	
		If $ (X,B+M) $ is a log canonical g-pair, then:
	\begin{enumerate}
		\item[(i)] an irreducible subvariety $ S $ of $ X $ is a \emph{log canonical centre} of $ (X,B+M) $ if there exists a divisorial valuation $ E $ over $ X $ such that $ c_X(E) = S $ and $ a(E,X,B+M) = -1 $,
		
		\item[(ii)] the \emph{non-klt locus} of $ (X,B+M) $, denoted by $ \nklt(X,B+M) $, is the union of all log canonical centres of $ (X,B+M) $.
	\end{enumerate}
	
	When $M'=0$, one recovers the definitions of singularities of usual pairs.
\end{dfn}

It is clear from the definition that if $ (X,B+M) $ is a dlt g-pair with $ \lfloor B \rfloor = 0 $, then $ (X,B+M) $ is klt.

	If $ (X,B+M) $ is a $ \Q $-factorial dlt g-pair, then by definition and by \cite[Remark 4.2.3]{BZ16}, the underlying pair $ (X,B) $ is $ \Q $-factorial dlt and the log canonical centres of $ (X,B+M) $ coincide with those of $ (X,B) $. In particular 
	\[ \nklt(X,B+M) = \nklt(X,B) = \Supp \lfloor B \rfloor \]
	by \cite[Proposition 3.9.2]{Fuj07a}. We will use this repeatedly in the paper without explicit mention.

We adopted the definition dlt g-pairs from \cite{HL22}, which behaves well under restrictions to log canonical centres and under operations of an MMP; such operations are analogous to those in the standard setting, see \cite[Section 4]{BZ16} or \cite[\S3.1]{HL22} for details.

\begin{notation}
	We will use the following notation throughout the paper. Let $(X,B+M)$ be a dlt g-pair and let $S$ be a log canonical centre of $(X,B+M)$. We define a dlt g-pair $(S,B_S+M_S)$ by adjunction, i.e.\ by the formula 
	$$K_S + B_S +M_S = (K_X + B +M)|_S$$
	as in \cite[Proposition 2.8]{HL22}. 
\end{notation}

The next result is \cite[Proposition 3.9]{HL22} and is used frequently in the paper.

\begin{lem}
	Let $(X,B+M)$ be a log canonical g-pair with data $ X' \overset{f}{\to} X \to Z $ and $ M' $. Then, after possibly replacing $f$ with a higher model, there exist a $\Q$-factorial dlt g-pair $(Y,\Delta+N)$ with data $ X' \overset{g}{\to} Y \to Z $ and $ M' $, and a projective birational morphism $\pi \colon Y \to X$ such that 
	$$K_Y + \Delta + N \sim_\R \pi^*(K_X + B +M).$$
	The g-pair $(Y,\Delta+N)$ is a \emph{dlt blowup} of $(X,B+M)$.
\end{lem}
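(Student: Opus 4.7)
The plan is to construct $(Y,\Delta+N)$ by running a carefully chosen relative MMP from a sufficiently high birational model of $(X,B+M)$. First I would replace $f\colon X'\to X$ with a higher model so that $X'$ is smooth, the pair $(X',\Supp B'+\Exc(f))$ is log smooth, where $B'$ is defined by $K_{X'}+B'+M'\sim_\R f^*(K_X+B+M)$, and every divisorial valuation over $X$ with discrepancy $-1$ with respect to $(X,B+M)$ is realised as a prime divisor on $X'$. Log canonicity forces every coefficient of $B'$ to be at most $1$.

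Next I would form the ``maximal boundary''
\[ \Gamma' := f_*^{-1}B + \sum_{\substack{E \text{ prime} \\ f\text{-exceptional}}} E , \]
so that $(X',\Gamma'+M')$ is a $\Q$-factorial dlt g-pair by log smoothness and the fact that all coefficients of $\Gamma'$ lie in $[0,1]$. A short discrepancy computation yields
\[ K_{X'}+\Gamma'+M' \sim_\R f^*(K_X+B+M) + F, \qquad F = \sum_E \big(1+a(E,X,B+M)\big) E , \]
the sum running over $f$-exceptional prime divisors. Since $(X,B+M)$ is log canonical, $F$ is effective and $f$-exceptional. The desired dlt blowup $\pi\colon Y\to X$ should be obtained by contracting exactly the support of $F$, leaving only the log canonical places among the exceptional divisors of $\pi$.

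To achieve this I would run a $(K_{X'}+\Gamma'+M')$-MMP over $X$ with scaling of an ample divisor. Because $K_{X'}+\Gamma'+M' \sim_{\R,X} F$ and $F$ is effective and $f$-exceptional, every step is $F$-negative and the whole MMP stays inside the class of NQC $\Q$-factorial dlt g-pairs by the MMP machinery for g-pairs in \cite[\S4]{BZ16} and \cite[\S3.1]{HanLi}. Granting termination, one arrives at a model $\pi\colon Y\to X$ on which the strict transform $F_Y$ of $F$ is nef over $X$ and $\pi$-exceptional, hence $F_Y=0$ by the Negativity Lemma \cite[Lemma 3.39]{KM98}; the resulting g-pair $(Y,\Delta+N)$ with $\Delta$ and $N$ the pushforwards of $\Gamma'$ and $M'$ is then $\Q$-factorial dlt and satisfies $K_Y+\Delta+N\sim_\R\pi^*(K_X+B+M)$, as required.

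The main obstacle is the termination of this MMP. The situation is as favourable as possible: the scaled divisor $F$ is effective and exceptional over $X$ at every step, and the starting pair $(X',\Gamma')$ is log smooth. In the classical case, termination is a standard consequence of BCHM-type existence of log terminal models for log smooth pairs with effective exceptional boundary. In the g-pair setting one needs the analogous existence statement for minimal models of NQC $\Q$-factorial dlt g-pairs with effective exceptional boundary, and this is the place where one would invoke the recent MMP advances for g-pairs to close the argument.
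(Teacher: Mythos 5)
The paper does not prove this lemma; it cites it as \cite[Proposition 3.9]{HanLi}, so there is no internal proof to compare against. Your strategy is the standard construction of dlt blowups and matches what the literature does: take a high log resolution, form the maximal boundary $\Gamma'$ giving every $f$-exceptional prime divisor coefficient one, run a $(K_{X'}+\Gamma'+M')$-MMP over $X$, and use the Negativity Lemma to conclude that the strict transform of $F$ vanishes on the resulting model. The computation $K_{X'}+\Gamma'+M' \sim_\R f^*(K_X+B+M) + F$ with $F$ effective and $f$-exceptional, and the final application of Negativity, are both correct.

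Two issues. First, your requirement that $X'$ realise \emph{every} divisorial valuation over $X$ with discrepancy $-1$ is generally impossible: a log canonical pair can have infinitely many log canonical places over a single point (already for a normal crossings boundary in the surface case), and any dlt blowup extracts only finitely many of them. Fortunately you never use this condition; the construction extracts precisely those lc places lying on your chosen $X'$, which is all that is needed, so simply drop that requirement. Second, and more substantively, termination of the relative MMP is exactly the content that needs proving, and "granting termination" is not a proof step. The g-pair $(X',\Gamma'+M')$ is dlt but not klt, and $\Gamma'$ need not be big over $X$, so the termination-with-scaling results one would like to quote do not apply off the shelf. The usual route, also taken in \cite[Section 4]{BZ16} and in the proof of \cite[Proposition 3.9]{HanLi}, is a perturbation argument reducing to a klt g-pair whose boundary is big over $X$ (replace $\Gamma'$ by $\Gamma'-\epsilon\lfloor\Gamma'\rfloor+\epsilon A$ for a general relatively ample $A$ and small $\epsilon>0$, arranging that any step of the new MMP is also a step for the original divisor), and then invoking the g-pair analogue of the Birkar--Cascini--Hacon--McKernan termination-with-scaling theorem. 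Your sketch should either carry out this reduction or cite the specific statements that do so; as written, it restates the difficulty rather than resolving it.
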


\begin{rem}\label{rem:nklt_dlt_blow-up}
	If $ f \colon (Y,\Delta+N) \to (X,B+M) $ is a dlt blowup of a log canonical g-pair $ (X,B+M) $, then by a suitable analogue of \cite[Lemma 2.30]{KM98} we have
	$$ \nklt(X,B+M) = f\big(\nklt(Y,\Delta+N)\big) = f\big( \Supp \lfloor \Delta \rfloor \big). $$ 
In particular, the number of log canonical centres of a given log canonical g-pair is finite.
\end{rem}

\subsection{Monotonicity of discrepancies}

Parts (i) and (iii) of the following result are a version of the so-called monotonicity lemma for g-pairs. Parts (ii) and (iv) will also be needed below, when we deal with the \emph{difficulty} of g-pairs.

\begin{lem} \label{lem:discrep}
	Let $(X,B+M)$ and $(X',B'+M')$ be g-pairs such that there exists a diagram 
		\begin{center}
		\begin{tikzcd}
			& Z \arrow[dl, "g" swap] \arrow[dr, "g'"] \\
			X \arrow[rr, "\varphi", dashed] \arrow[dr, "f" swap] && X' \arrow[dl, "f'"] \\
			& Y ,
		\end{tikzcd}
	\end{center}
	where $Y$ and $Z$ are normal varieties, all morphisms are proper birational, $ K_{X'} + B' + M' $ is $ f' $-nef, and there exists a nef $ \R $-Cartier $ \R $-divisor $ M_Z $ on $ Z $ with $ g_* M_Z = M $ and $ g'_* M_Z = M' $.
	\begin{enumerate}
		\item[(i)] Assume that $B'=\varphi_*B+E$, where $E$ is the sum of all prime divisors which are contracted by $\varphi^{-1}$, and that
		$$a(F,X,B+M)\leq a(F,X',B'+M')$$ 
		for every $\varphi$-exceptional divisor $F$ on $X$. Then for any geometric valuation $ F $ over $ X $ we have
	\[ a(F,X,B+M) \leq a(F,X',B'+M') . \]
	
		\item[(ii)] Under the assumptions of (i), assume additionally that $(X,B+M)$ is dlt and let $S$ be a log canonical centre of $(X,B+M)$. Assume that $\varphi$ is an isomorphism at the generic point of $S$ and define $S'$ as the strict transform of $S$ on $X'$. Then for any geometric valuation $ F $ over $ S $ we have
	\[ a(F,S,B_S+M_S) \leq a(F,S',B_{S'}+M_{S'}) . \]

		\item[(iii)] Assume that $ -(K_X + B + M) $ is $ f $-nef and that $ f_* B = f'_* B' $. Then for any geometric valuation $ F $ over $ Y $ we have
	\[ a(F,X,B+M) \leq a(F,X',B'+M') , \]
	and strict inequality holds if either 
	\begin{enumerate}
		\item[(a)] $ -(K_X + B + M) $ is $ f $-ample and $ f $ is not an isomorphism above the generic point of $ c_Y(F) $, or
		
		\item[(b)] $ K_{X'} + B' + M' $ is $ f' $-ample and $ f' $ is not an isomorphism above the generic point of $ c_Y(F) $.
	\end{enumerate}
	In particular, if $(X,B+M)$ is log canonical and if either (a) or (b) holds, then $F$ is not a log canonical centre of $(X',B'+M')$.

		\item[(iv)] Assume that $ -(K_X + B + M) $ is $ f $-nef, that $ f_* B = f'_* B' $ and that $(X,B+M)$ is dlt. Let $S$ be a log canonical centre of $(X,B+M)$, assume that $\varphi$ is an isomorphism at the generic point of $S$ and define $S'$ as the strict transform of $S$ on $X'$. Let $T$ be the normalisation of $f(S)$, so that we have the following diagram:
\begin{center}
	\begin{tikzcd}[column sep = 2em, row sep = 2.25em]
		(S,B_S+M_S) \arrow[dr, "f|_S" swap] \arrow[rr, dashed, "\varphi|_S"] && (S',B_{S'}+M_{S'}) \arrow[dl, "f'|_{S'}"] \\
		& T. &
	\end{tikzcd}
\end{center}
 Then for any geometric valuation $ F $ over $ T $ we have
	\[ a(F,S,B_S+M_S) \leq a(F,S',B_{S'}+M_{S'}) , \]
	and strict inequality holds if either 
	\begin{enumerate}
		\item[(a)] $ -(K_X + B + M) $ is $ f $-ample and $ f|_S $ is not an isomorphism above the generic point of $ c_T(F) $, or
		
		\item[(b)] $ K_{X'} + B' + M' $ is $ f' $-ample and $ f'|_S $ is not an isomorphism above the generic point of $ c_T(F) $.
	\end{enumerate}
	In particular, if either (a) or (b) holds, then $F$ is not a log canonical centre of $(S',B_{S'}+M_{S'})$.
	\end{enumerate}	
	\end{lem}

\begin{proof}
	The proofs of (i) and (iii) are analogous to the proofs of \cite[Lemma 3.38 and Proposition 3.51]{KM98} and we provide the details for the benefit of the reader.
	
	By possibly replacing $ Z $ by a higher birational model, we may additionally assume that $ c_Z(F) $ is a divisor on $ Z $. Set $ h := f \circ g = f' \circ g' $. Then
	\begin{align*}
		K_Z + M_Z &\sim_\R g^* (K_X + B + M) + \sum a(F_i,X,B+M) F_i \\
		&\sim_\R (g')^* (K_{X'} + B' + M') + \sum a(F_i,X',B'+M') F_i .
	\end{align*}	 
	Consider the $ \R $-Cartier $ \R $-divisor 
	\begin{align} 
	H&:=\sum \big( a(F_i,X,B+M) - a(F_i,X',B'+M') \big) F_i \label{eq:78}\\
	& \sim_\R (g')^* (K_{X'} + B' + M') - g^* (K_X + B + M).\notag
	\end{align}
	Under the assumptions of (i) the divisor $H$ is $g$-nef and $g_*H\leq0$, hence $H\leq0$ by \cite[Lemma 3.39(1)]{KM98}. Under the assumptions of (iii) the divisor $H$ is $ h $-nef and $ h_*H=0 $ since $ f_* B = f'_* B' $, hence $H\leq0$ by \cite[Lemma 3.39(1)]{KM98}. This yields (i) and the first statement of (iii).
	
	If the case (a) or the case (b) of (iii) holds, then $ H $ is not numerically $ h $-trivial over the generic point $\eta$ of $ c_Y(F) $. Then \cite[Lemma 3.39(2)]{KM98} implies that $ h^{-1}(\eta) \subseteq \Supp H $ and therefore that $ F \subseteq \Supp H $. This yields the second statement of (iii).
	
	For (ii) and (iv), by \cite[Lemma 2.45]{KM98} there is a sequence of blowups of $S$ along the centres of $F$ such that the centre of $F$ becomes a divisor. By considering these blowups as blowups of $X$ and possibly blowing up further, we may assume that the centre $c_{S_Z}(F)$ is a divisor, where $S_Z$ is the strict transform of $S$ on $Z$, and that there exist finitely many prime divisors $\widehat{F}_i$ on $Z$ such that $\widehat{F}_i|_{S_Z}=c_{S_Z}(F)$ and $c_T(F)=c_Y\big(\widehat{F}_i\big)|_T$ for each such $\widehat{F}_i$. Then (ii) follows by restricting the relation \eqref{eq:78} to $S_Z$: indeed, $S_Z\nsubseteq \Supp H$ since $\varphi$ is an isomorphism at the generic point of $S$.
	
	In the case (a) of (iv) we have then that $f$ is not an isomorphism above the generic point of each $c_Y\big(\widehat{F}_i\big)$, so (iv) follows from (iii) as in the proof of (ii) above. We obtain (iv) in the case (b) analogously, by first blowing up along the centres of $F$ on $S'$ instead.
\end{proof}

\subsection{Minimal models and canonical models}

\begin{dfn} 
	Let $(X,B +M)$ be a log canonical g-pair with data $ X' \overset{f}{\to} X \to Z $ and $ M' $ and consider a birational map $ \varphi \colon (X,B+M) \dashrightarrow (Y,B_Y+M_Y)$ over $Z$ to a $\Q$-factorial g-pair $ (Y/Z,B_Y+M_Y) $. We may assume that $X'$ is a high enough model so that the map $\varphi\circ f$ is a morphism. Then $\varphi$ is a \emph{minimal model in the sense of Birkar-Shokurov over $Z$} of the g-pair $(X,B+M)$ if $ B_Y =\varphi_*B+E$, where $E$ is the sum of all prime divisors which are contracted by $\varphi^{-1}$, if $M_Y=(\varphi\circ f)_*M'$, if the divisor $K_Y+B_Y+M_Y$ is nef over $Z$ and if
	$$a(F,X,B+M) < a(F,Y,B_Y+M_Y)$$
	for any prime divisor $ F $ on $ X $ which is contracted by $\varphi $. Note that then the g-pair $ (Y,B_Y+M_Y)$ is log canonical by Lemma \ref{lem:discrep}(i).
	
	If, moreover, the map $\varphi$ is a birational contraction, but $Y$ is not necessarily $\Q$-factorial if $X$ is not $\Q$-factorial (and $Y$ is $\Q$-factorial if $X$ is $\Q$-factorial), then $\varphi$ is a \emph{minimal model} of $(X/Z,B+M)$.
\end{dfn}

For the differences among these notions of a minimal model, see \cite[\S 2.2]{LT22}; note that here we allow a minimal model in the sense of Birkar-Shokurov to be log canonical and not only dlt, which is in alignment with the definitions in \cite{Hash18a,Hash19b}.

\begin{rem}\label{rem:models}
Let $(X,B +M)$ be a log canonical g-pair and let $ \varphi \colon (X,B+M) \dashrightarrow (Y,B_Y+M_Y)$ be a minimal model of $(X,B +M)$ over $Z$. Then any dlt blowup of $(Y,B_Y+M_Y)$ is a minimal model in the sense of Birkar-Shokurov of $(X,B +M)$ over $Z$.
\end{rem}

\begin{dfn}
	Assume that we have a commutative diagram
	\begin{center}
		\begin{tikzcd}				(X,B+M) \arrow[rr, "\varphi", dashed] \arrow[dr, "f" swap] && (X',B'+M') \arrow[dl, "f'"] \\
			& Y 
		\end{tikzcd}
	\end{center}
	where $(X,B+M)$ and $(X',B'+M')$ are g-pairs and $Y$ is normal, $f$ and $f'$ are projective birational morphisms and $\varphi$ is a birational contraction, $ \varphi_* B = B' $, and $ M $ and $ M' $ are pushforwards of the same nef $\R$-divisor on a common birational model of $X$ and $X'$. If $(X,B+M)$ is log canonical, if $ K_{X'} + B' + M' $ is ample over $Y$ and if $a(F,X,B+M)\leq a(F,X',B'+M')$ for every $\varphi$-exceptional prime divisor $F$ on $X$, then $(X',B'+M')$ is a \emph{log canonical model of $(X,B+M)$}.
\end{dfn}

The following result shows how a minimal model and a log canonical model of a g-pair are related.

\begin{lem} \label{lem:maptoCM}
	Let $ (X/Z,B+M) $ be a log canonical g-pair, let 
	$ (X^m,B^m+M^m) $ be a minimal model of $ (X,B+M) $ over $ Z $ and let $ (X^\textit{lc},B^\textit{lc}+M^\textit{lc}) $ be a log canonical model of $ (X,B+M) $ over $ Z $. Then there exists a birational morphism 
	$ \alpha \colon X^m \to X^\textit{lc} $ such that
	\[ K_{X^m} + B^m+M^m \sim_\R \alpha^* (K_{X^\textit{lc}} + B^\textit{lc}+M^\textit{lc}) . \]
	In particular, $ K_{X^m} + B^m+M^m $ is semiample over $ Z $ and there exists a unique log canonical model of $(X,B+M)$, up to isomorphism. 
\end{lem}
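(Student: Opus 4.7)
The plan is to take a common resolution $(p,q) \colon W \to X^m \times X^{lc}$ of the induced birational map $\varphi \colon X^m \dashrightarrow X^{lc}$ and to apply Lemma~\ref{lem:discrep}(i) in both directions in order to force the equality of discrepancies $a(F, X^m, B^m+M^m) = a(F, X^{lc}, B^{lc}+M^{lc})$ at every geometric valuation $F$. Writing $K_W + B_W^m + M_W \sim_\R p^*(K_{X^m}+B^m+M^m)$ and $K_W + B_W^{lc} + M_W \sim_\R q^*(K_{X^{lc}}+B^{lc}+M^{lc})$ on a sufficiently high model $W$, the equality of discrepancies will force $B_W^m = B_W^{lc}$, and hence $p^*(K_{X^m}+B^m+M^m) \sim_\R q^*(K_{X^{lc}}+B^{lc}+M^{lc})$.

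To apply Lemma~\ref{lem:discrep}(i) to $\varphi$ with the pairs $(X^m, B^m+M^m)$ and $(X^{lc}, B^{lc}+M^{lc})$, one first verifies that $B^{lc} - \varphi_* B^m$ is supported on $\varphi^{-1}$-exceptional divisors on $X^{lc}$: since $X \dashrightarrow X^{lc}$ is a birational contraction while $X \dashrightarrow X^m$ may extract divisors, a direct computation shows that $B^{lc} - \varphi_* B^m$ equals the strict transform on $X^{lc}$ of those components of $B$ contracted by $X \dashrightarrow X^m$, and these are indeed $\varphi^{-1}$-exceptional on $X^{lc}$. Second, one verifies the discrepancy inequality $a(G^m, X^m, B^m+M^m) \leq a(G^m, X^{lc}, B^{lc}+M^{lc})$ for each $\varphi$-exceptional prime divisor $G^m$ on $X^m$ by a case analysis on whether $G^m$ is extracted by $X \dashrightarrow X^m$ or is the strict transform of a prime divisor on $X$, combined with Lemma~\ref{lem:discrep}(i) applied to the log canonical model condition. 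Symmetrically, applying Lemma~\ref{lem:discrep}(i) to $\varphi^{-1}$, one invokes the strict inequality in the Birkar--Shokurov minimal model definition for prime divisors on $X$ contracted by $X \dashrightarrow X^m$ (which correspond precisely to $\varphi^{-1}$-exceptional divisors on $X^{lc}$) to obtain the reverse inequality.

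Once $p^*(K_{X^m}+B^m+M^m) \sim_\R q^*(K_{X^{lc}}+B^{lc}+M^{lc})$, any curve $C \subseteq W$ contracted by $p$ satisfies $q^*(K_{X^{lc}}+B^{lc}+M^{lc}) \cdot C = 0$, which by the ampleness of $K_{X^{lc}}+B^{lc}+M^{lc}$ over $Z$ forces $q(C)$ to be a point. The rigidity lemma then supplies the birational morphism $\alpha \colon X^m \to X^{lc}$ with $q = \alpha \circ p$, and pulling back by the birational $p$ yields $K_{X^m}+B^m+M^m \sim_\R \alpha^*(K_{X^{lc}}+B^{lc}+M^{lc})$. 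Semiampleness over $Z$ is then immediate, and the uniqueness of the log canonical model follows by applying the same argument to any two candidate log canonical models. The main obstacle is the discrepancy bookkeeping required to verify the hypotheses of Lemma~\ref{lem:discrep}(i) in both directions, since the Birkar--Shokurov notion of minimal model allows extracted divisors while the log canonical model must be a birational contraction.
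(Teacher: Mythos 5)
Your approach is genuinely different from the paper's, and it contains two issues worth flagging.

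The paper takes a common resolution $W$ of all three varieties $X$, $X^m$, $X^{\textit{lc}}$, with maps $p$, $q$, $r$, and applies Lemma~\ref{lem:discrep}(i) only to the maps $X\dashrightarrow X^m$ and $X\dashrightarrow X^{\textit{lc}}$ (where its hypotheses are literally and easily satisfied, since both are birational contractions onto models whose boundary is the strict transform of $B$), obtaining $p^*(K_X+B+M)\sim_\R q^*(K_{X^m}+B^m+M^m)+F$ and $p^*(K_X+B+M)\sim_\R r^*(K_{X^{\textit{lc}}}+B^{\textit{lc}}+M^{\textit{lc}})+G$ with $F$, $G$ effective and exceptional; the Negativity Lemma applied over $X^m$ and over $X^{\textit{lc}}$ then gives $F=G$, hence the desired equality of pullbacks. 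You instead compare $X^m$ and $X^{\textit{lc}}$ head-on by applying Lemma~\ref{lem:discrep}(i) to $\varphi\colon X^m\dashrightarrow X^{\textit{lc}}$ and to $\varphi^{-1}$. This is where the difficulty lies.

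First, a terminological misreading: in the paper's definitions, a ``minimal model'' (as opposed to a ``minimal model in the sense of Birkar--Shokurov'') \emph{is} a birational contraction, so $X\dashrightarrow X^m$ does not extract divisors. Your discussion of extracted components of $B$ is therefore based on a wrong premise, though it is not fatal in itself. Second, and more seriously, the hypothesis of Lemma~\ref{lem:discrep}(i) requires that $B'=\varphi_*B+E$ where $E$ is the \emph{reduced} sum of all $\varphi^{-1}$-exceptional prime divisors, i.e.\ each appears with coefficient $1$. When you apply it to $\varphi\colon X^m\dashrightarrow X^{\textit{lc}}$, the $\varphi^{-1}$-exceptional primes on $X^{\textit{lc}}$ are precisely the strict transforms of prime divisors $D$ on $X$ that are contracted by $X\dashrightarrow X^m$ but not by $X\dashrightarrow X^{\textit{lc}}$, and their coefficient in $B^{\textit{lc}}$ is $\mult_D B$, which is in general $<1$. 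So the hypothesis of Lemma~\ref{lem:discrep}(i), as stated, is \emph{not} satisfied by $\varphi$, nor by $\varphi^{-1}$. It is true that the proof of Lemma~\ref{lem:discrep}(i) only uses that $B'-\varphi_*B$ is supported on $\varphi^{-1}$-exceptional divisors (the coefficient-$1$ condition on $E$ is never used), so your argument can be repaired by invoking that weaker version; but as written your proof cites the lemma in a case where its stated hypothesis fails, and you do not acknowledge or justify this. The paper's route of comparing both models to $X$ avoids the problem entirely because there $E=0$ on the nose, which is why it is the cleaner argument. The rigidity and uniqueness steps in your proposal are the same as in the paper and are fine.
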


\begin{proof}
	The proof is analogous to the proof of \cite[Lemma 4.8.4]{Fuj17} and we provide the details for the benefit of the reader.
	
	Let $W$ be a common resolution of $X$, $X^m$ and $X^\textit{lc}$, together with morphisms $p\colon W\to X$, $q\colon W\to X^m$ and $r\colon W\to X^\textit{lc}$. We may write
	\[ p^*(K_X + B + M) \sim_\R q^*(K_{X^m} + B^m+M^m) + F \]
	and
	\[ p^*(K_X + B + M) \sim_\R r^*(K_{X^\textit{lc}} + B^\textit{lc}+M^\textit{lc}) + G , \]
	where $ F$ is effective and $ q $-exceptional and $ G$ is effective and $ r $-exceptional, see Lemma \ref{lem:discrep}. Therefore,
	\[ q^*(K_{X^m} + B^m+M^m) + F \sim_\R r^*(K_{X^\textit{lc}} + B^\textit{lc}+M^\textit{lc}) + G . \]
	Note that $ q_*(G-F)\geq0 $ and $ -(G-F) $ is $ q $-nef, and that $ r_*(F-G)\geq0 $ and $ -(F-G) $ is $ r $-nef. This implies that $ F=G $ by the Negativity lemma \cite[Lemma 3.39]{KM98}, and therefore,
	\begin{equation}\label{eq:rig}
		q^*(K_{X^m} + B^m+M^m) = r^*(K_{X^\textit{lc}} + B^\textit{lc}+M^\textit{lc}).
	\end{equation}
	Let $ C $ be a curve on $ W $ which is contracted by $ q $. Then
	\begin{align*}
		0 &= q^*(K_{X^m} + B^m+M^m) \cdot C = r^*(K_{X^\textit{lc}} + B^\textit{lc}+M^\textit{lc}) \cdot C \\
		&= (K_{X^\textit{lc}} + B^\textit{lc}+M^\textit{lc}) \cdot r_* C , 
	\end{align*}
	hence $ C $ is contracted by $ r $ as $ K_{X^\textit{lc}} + B^\textit{lc}+M^\textit{lc} $ is ample over $ Z $. Thus, by the Rigidity lemma \cite[Lemma 1.15]{Deb01} there exists a birational morphism $ \alpha \colon X^m \to X^\textit{lc} $ such that $ r=\alpha \circ q $, and the first statement follows from \eqref{eq:rig}.
	
	Assume that there exists another log canonical model $(Y,B_Y+M_Y)$ of $(X,B+M)$. Then analogously as above, there exists a birational morphism $\beta\colon X^\textit{lc}\to Y$ such that
	\[ K_{X^\textit{lc}} + B^\textit{lc}+M^\textit{lc} \sim_\R \beta^* (K_{Y} + B_Y+M_Y) . \]
 	Since the divisor $K_{X^\textit{lc}} + B^\textit{lc}+M^\textit{lc}$ is ample over $Z$, the map $\beta$ must be an isomorphism.
\end{proof}

\begin{dfn}
	Assume that we have a commutative diagram
		\begin{center}
			\begin{tikzcd}
				(X,B+M) \arrow[rr, "\varphi", dashed] \arrow[dr, "f" swap] && (X',B'+M') \arrow[dl, "f'"] \\
				& Y 
			\end{tikzcd}
		\end{center}
	where $(X,B+M)$ and $(X',B'+M')$ are g-pairs and $Y$ is normal, $f$ and $f'$ are projective birational morphisms and $\varphi$ is an isomorphism in codimension one, $ \varphi_* B = B' $, and $ M $ and $ M' $ are pushforwards of the same nef $\R$-divisor on a common birational model of $X$ and $X'$. 
	\begin{enumerate}
		\item[(a)] If $ -(K_X + B + M) $ is ample over $Y$ and if $ K_{X'} + B' + M' $ is ample over $Y$, then the diagram is an \emph{ample small quasi-flip}.
	
		\item[(b)] An ample small quasi-flip is a \emph{flip} if $f$ and $f'$ are isomorphisms in codimension one and if $ \rho(X/Y) = \rho(X'/Y) =1 $.
	\end{enumerate}
\end{dfn}

We recall that flips for log canonical pairs exist by \cite[Corollary 1.2]{Bir12a} or \cite[Corollary 1.8]{HX13}. On the other hand, the existence of flips for g-pairs is not known yet in full generality. We refer, however, to \cite[Section 4]{BZ16} and \cite[\S 3.1]{HL22} for known special cases.

The following result follows from \cite[Lemmas 4.3.8 and 4.9.3]{Fuj17}.

\begin{lem} \label{lem:klt_lc}
	The termination of flips for $ \Q $-factorial klt pairs of dimension at most $ d $ implies the termination of flips for log canonical pairs of dimension $ d $.
\end{lem}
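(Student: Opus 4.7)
The plan is to reduce from lc to klt in three stages: first lift the flip sequence to a $\Q$-factorial dlt blowup, then apply the already-known special termination for dlt pairs, and finally pass to a klt perturbation of the boundary.

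For the first stage (lc to $\Q$-factorial dlt), fix a $\Q$-factorial dlt blowup $\pi_1\colon(Y_1,\Delta_1)\to(X_1,B_1)$; for each $i$, pick any $\Q$-factorial dlt blowup $(Y_i,\Delta_i)$ of $(X_i,B_i)$. By the lift-of-flips principle of \cite[Lemma 4.3.8]{Fuj17}, the induced birational map $(Y_i,\Delta_i)\dashrightarrow(Y_{i+1},\Delta_{i+1})$ is a finite $(K_{Y_i}+\Delta_i)$-MMP consisting of divisorial contractions and flips. Since $\Q$-factoriality is preserved throughout and each divisorial contraction strictly drops the relative Picard number $\rho(Y_i/Z)$, the total number of divisorial contractions across the whole sequence is bounded by $\rho(Y_1/Z)$. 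After truncation, one obtains an infinite sequence of $\Q$-factorial dlt flips whose termination implies termination of the original lc sequence.

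For the second stage, invoke the classical special termination for dlt pairs \cite{Fuj07a}: there is an integer $N$ such that $\Exc(\theta_i)\cap\lfloor\Delta_i\rfloor=\emptyset$ for all $i\geq N$. Truncate and assume this holds for every $i$. For the third stage, set $S_i:=\lfloor\Delta_i\rfloor$ and fix any $\epsilon\in(0,1)$. Then $(Y_i,\Delta_i-\epsilon S_i)$ is $\Q$-factorial klt, since its round-down is zero and all discrepancies are strictly greater than those of $(Y_i,\Delta_i)$. Because $\Exc(\theta_i)\cap S_i=\emptyset$ by Stage 2, the divisor $S_i$ is pulled back from the base $Z_i$, so $S_i\equiv_{Z_i}0$, and hence $-(K_{Y_i}+\Delta_i-\epsilon S_i)\equiv_{Z_i}-(K_{Y_i}+\Delta_i)$ is $\theta_i$-ample. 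Thus $\theta_i$ is also a $(K_{Y_i}+\Delta_i-\epsilon S_i)$-flipping contraction, and the same diagram exhibits $(Y_{i+1},\Delta_{i+1}-\epsilon S_{i+1})$ as its klt flip. This yields an infinite sequence of $\Q$-factorial klt flips in dimension $d$, contradicting the hypothesis.

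The main obstacle is the first stage: a single lc flip does not in general lift to a single flip on the dlt blowup, only to a finite multi-step $(K+\Delta)$-MMP, and one must track $\Q$-factoriality throughout and verify global finiteness of divisorial contractions. Once this lifting is in place, Stages 2 and 3 are essentially formal applications of known results.
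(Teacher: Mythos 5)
Your three-stage reduction (lift to a $\Q$-factorial dlt blowup, apply special termination for dlt pairs, perturb away the round-down) is exactly the content behind the paper's citation of \cite[Lemmas 4.3.8 and 4.9.3]{Fuj17}, and it matches the proof the paper itself gives for the g-pair analogue (Lemma \ref{lem:reduction_g-term}), which subtracts the full $\lfloor\Delta_1\rfloor$ rather than an $\epsilon$-fraction of it; both perturbations work. So this is the same approach, and it is essentially correct.

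One sentence in Stage 1 is stated too loosely and would not survive a careful referee: you cannot ``pick any $\Q$-factorial dlt blowup $(Y_i,\Delta_i)$ of $(X_i,B_i)$'' and expect the induced map $Y_i\dashrightarrow Y_{i+1}$ to decompose as a $(K_{Y_i}+\Delta_i)$-MMP --- two arbitrary dlt blowups of the same lc flip are in general related only by a small birational map (possibly a flop), not by a minimal model program. The dlt blowups for $i\geq 2$ must be \emph{constructed}, not chosen: one fixes $f_1\colon (Y_1,\Delta_1)\to(X_1,B_1)$, runs a $(K_{Y_1}+\Delta_1)$-MMP over $Z_1$ (which terminates because $(X_2,B_2)$ is a minimal/log canonical model of $(Y_1,\Delta_1)$ over $Z_1$), and one then \emph{deduces}, as in Lemma \ref{lem:maptoCM} and Lemma \ref{lem:lifting}, that the output $(Y_2,\Delta_2)$ is a dlt blowup of $(X_2,B_2)$, and iterates. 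Your citation of \cite[Lemma 4.3.8]{Fuj17} carries the right content, so this is a defect of exposition rather than a missing idea, but the phrase ``pick any'' should be replaced by this construction. Two further small points: the claim that ``$S_i$ is pulled back from the base'' is not literally true (the base of a flipping contraction is not $\Q$-factorial, so $(\theta_i)_*S_i$ need not be $\Q$-Cartier), but the conclusion $S_i\equiv_{Z_i} 0$ is correct since $S_i$ is $\Q$-Cartier on $Y_i$ and disjoint from $\Exc(\theta_i)$; and you should note explicitly that special termination for dlt pairs gives that \emph{both} the flipping and the flipped loci eventually avoid $\lfloor\Delta_i\rfloor$, since you also need $S_{i+1}\cap\Exc(\theta_i^+)=\emptyset$ to conclude that $K_{Y_{i+1}}+\Delta_{i+1}-\epsilon S_{i+1}$ is ample over the base of the flip.
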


\subsection{The difficulty}

In this subsection we follow closely \cite{HL22} and we include the details for the benefit of the reader. The \emph{difficulty} stands as a collective noun for various invariants related to the discrepancies of a pair or a g-pair, which behave well under the operations of the Minimal Model Program. The first version of the difficulty was introduced in \cite{Sho85}. The version below was defined in \cite{HL22}.

\begin{dfn}\label{dfn:difficulty}
Let $(X,B + M)$ be an NQC $\Q$-factorial dlt g-pair with data $ X' \to X \to Z $ and $ M' $. We may write $B=\sum_{i=1}^k b_i B_i$ with prime divisors $B_i$ and $b_i\in (0,1]$, and 
$ M' =\sum_{i=1}^\ell \mu_i M_i'$ with $ M_i'$ Cartier divisors which are nef over $ Z $ and $ \mu_i \in (0, +\infty) $.
Let $S$ be a log canonical centre of $(X,B +M)$. Set $b = \{b_1,\dots,b_k\}$, $\mu = \{\mu_1,\dots,\mu_\ell\}$, and 
$$\mathcal S(b,\mu) = \bigg\{\frac{m-1}{m} + \sum_{i=1}^k\frac{r_ib_i}{m} + \sum_{i=1}^\ell\frac{s_i\mu_i}{m}\leq 1 \ \Big|\ m \in \N_{>0}, r_i , s_i \in\N\bigg\}.$$
Note that the coefficients of $B_S$ belong to the set $\mathcal S(b,\mu)$ by the proof of \cite[Proposition 4.9]{BZ16}. For each $\alpha\in\mathcal S(b,\mu)$ set
$$ d_{<-\alpha}(S,B_S +M_S) = \#\big\{E \mid a(E,S,B_S +M_S) < {-} \alpha, \ c_S(E) \nsubseteq \Supp \lfloor B_S \rfloor \big\}$$
and
$$ d_{\leq -\alpha}(S,B_S +M_S) = \#\big\{E \mid a(E,S,B_S +M_S) \leq {-} \alpha, \ c_S(E) \nsubseteq \Supp \lfloor B_S \rfloor \big\}.$$
The \emph{difficulty} of the g-pair $(S,B_S + M_S)$ is defined as
$$ d_{b,\mu}(S,B_S +M_S) = \sum_{\alpha\in\mathcal S(b,\mu)} \Big(d_{<-\alpha}(S,B_S +M_S)+d_{\leq-\alpha}(S,B_S +M_S)\Big).$$
\end{dfn}

\begin{lem}\label{lem:finite}
	In the notation from Definition \ref{dfn:difficulty}:
	\begin{enumerate}
		\item[(i)] there exists $\gamma\in(0,1)$ such that $a(E,S,B_S +M_S)\geq-\gamma$ for each geometric valuation $E$ over $S$ such that $c_S(E) \nsubseteq \Supp \lfloor B_S \rfloor$;
		\item[(ii)] the set $\mathcal S(b,\mu)\cap[0,\gamma]$ is finite;
		\item[(iii)] we have 
			$$d_{b,\mu}(S,B_S +M_S) <+\infty.$$
		\end{enumerate}	
\end{lem}

\begin{proof}
	Let $S'\stackrel{\sigma}{\to} S\to Z$ and $M'_S$ be the data of the g-pair $(S,B_S+M_S)$. Consider the set $U:=S\setminus\Supp\lfloor B_S\rfloor$ and let $U':=\sigma^{-1}(U)$. Then we obtain the klt g-pair $(U,B_S|_U+M_S|_U)$ with data $U'\to U\to U$ and $M'_S|_{U'}$. Define $B'_S$ by the equation
	$$K_{S'}+B'_S+M'_S\sim_\R \sigma^*(K_S+B_S+M_S).$$
	Then for each geometric valuation $E$ over $S$ such that $c_S(E) \nsubseteq \Supp \lfloor B_S \rfloor$ we have 
	\begin{align*}
	a(E,S,B_S +M_S)&=a(E,U,B_S|_U +M_S|_U)\\
	&=a(E,U',B'_S|_{U'} +M'_S|_{U'})=a(E,U',B'_S|_{U'}),
	\end{align*}
	hence
	\begin{equation}\label{eq:55}
	d_{<-\alpha}(S,B_S +M_S) = \#\big\{E \mid a(E,U',B'_S|_{U'}) < {-} \alpha\big\}	
	\end{equation}
and
	\begin{equation}\label{eq:56}
	d_{\leq -\alpha}(S,B_S +M_S) = \#\big\{E \mid a(E,U',B'_S|_{U'}) \leq {-} \alpha\big\}.	
	\end{equation}
	Since the pair $(U',B'_S|_{U'})$ is klt, by \eqref{eq:55} and \eqref{eq:56} there exists $\gamma\in(0,1)$ such that (i) holds, and in particular:
	$$d_{<-\alpha}(S,B_S +M_S) =d_{\leq-\alpha}(S,B_S +M_S) =0\quad \text{if }\alpha>\gamma.$$
	On the other hand, $d_{<-\alpha}(S,B_S +M_S) $ and $d_{\leq-\alpha}(S,B_S +M_S) $ are finite for any $\alpha\in\mathcal S(b,\mu)$ by \eqref{eq:55} and \eqref{eq:56} and by \cite[Proposition 2.36(2)]{KM98}. Since the set $\mathcal S(b,\mu)\cap[0,\gamma]$ is finite by \cite[Lemma 7.4.4]{Kol92}, (ii) and (iii) follow.
	\end{proof}

\begin{pro}\label{pro:monoton}
Assume the notation from Definition \ref{dfn:difficulty}. Consider a flip
	\begin{center}
		\begin{tikzcd}[column sep = 0.8em, row sep = 1.75em]
			(X,B+M) \arrow[dr, "\theta" swap] \arrow[rr, dashed, "\pi"] && (X^+,B^++M^+) \arrow[dl, "\theta^+"] \\
			& Z &
		\end{tikzcd}
	\end{center}
Assume that $\pi$ is an isomorphism at the generic point of $S$ and define $S^+$ as the strict transform of $S$ on $X^+$. Moreover, assume that $\pi|_S$ is an isomorphism along $\Supp \lfloor B_S\rfloor$. Then the following hold.
	\begin{enumerate}
		\item[(i)] We have
			$$ d_{b,\mu}(S,B_S +M_S) \geq d_{b,\mu}(S^+,B_{S^+} +M_{S^+}). $$
		\item[(ii)] If there exists a geometric valuation $E$ over $S$ such that $c_S(E)$ is a divisor but $c_{S^+}(E)$ is not a divisor, then there exists $\alpha_0\in\mathcal S(b,\mu)\setminus\{1\}$ such that
			$$d_{\leq-\alpha_0}(S,B_S +M_S) > d_{\leq-\alpha_0}(S^+,B_{S^+} +M_{S^+}).$$
		\item[(iii)] If there exists a geometric valuation $E$ over $S$ such that $c_S(E)$ is not a divisor but $c_{S^+}(E)$ is a divisor, then there exists $\alpha_0\in\mathcal S(b,\mu)\setminus\{1\}$ such that
			$$d_{<-\alpha_0}(S,B_S +M_S) > d_{<-\alpha_0}(S^+,B_{S^+} +M_{S^+}).$$
		\item[(iv)] If $\pi|_S$ is not an isomorphism in codimension $1$, then
			$$ d_{b,\mu}(S,B_S +M_S) > d_{b,\mu}(S^+,B_{S^+} +M_{S^+}). $$
	\end{enumerate}		
\end{pro}

\begin{proof}
Part (i) follows immediately from Lemma \ref{lem:discrep}.

For (ii), note that $c_S(E)\not\subseteq\Supp\lfloor B_S\rfloor$ and $c_{S^+}(E)\not\subseteq\Supp\lfloor B_{S^+}\rfloor$ since $\pi|_S$ is an isomorphism along $\Supp \lfloor B_S\rfloor$. Then there exists $\alpha_0\in\mathcal S(b,\mu)\setminus\{1\}$ such that, by Lemma \ref{lem:discrep},
$$-\alpha_0=a(E,S,B_S +M_S)< a(E,S^+,B_{S^+} +M_{S^+}),$$
and (ii) follows.

For (iii), as above we again have $c_S(E)\not\subseteq\Supp\lfloor B_S\rfloor$ and $c_{S^+}(E)\not\subseteq\Supp\lfloor B_{S^+}\rfloor$. Then there exists $\alpha_0\in\mathcal S(b,\mu)\setminus\{1\}$ such that, by Lemma \ref{lem:discrep},
$$a(E,S,B_S +M_S)< a(E,S^+,B_{S^+} +M_{S^+})=-\alpha_0,$$
and (iii) follows.

Part (iv) is an immediate consequence of (ii) and (iii).
\end{proof}

\section{Lifting a sequence of quasi-flips}

The following result allows us to pass from a sequence of (quasi-)flips of log canonical pairs to a sequence of flips of dlt pairs.

\begin{lem}\label{lem:lifting}
Let $ (X_1,B_1) $ be a quasi-projective log canonical pair over a quasi-projective variety $Z$. Consider a sequence of small ample quasi-flips over $Z$:
\begin{center}
	\begin{tikzcd}[column sep = 2em, row sep = 2.25em]
		(X_1,B_1) \arrow[dr, "\theta_1" swap] \arrow[rr, dashed, "\pi_1"] && (X_2,B_2) \arrow[dl, "\theta_1^+"] \arrow[dr, "\theta_2" swap] \arrow[rr, dashed, "\pi_2"] && (X_3,B_3) \arrow[dl, "\theta_2^+"] \arrow[rr, dashed, "\pi_3"] && \dots \\
		& Z_1 && Z_2
	\end{tikzcd}
\end{center}
Then there exists a diagram
\begin{center}
	\begin{tikzcd}[column sep = 2em, row sep = large]
		(Y_1,\Delta_1) \arrow[d, "f_1" swap] \arrow[rr, dashed, "\rho_1"] && (Y_2,\Delta_2) \arrow[d, "f_2" swap] \arrow[rr, dashed, "\rho_2"] && (Y_3,\Delta_3) \arrow[d, "f_3" swap] \arrow[rr, dashed, "\rho_3"] && \dots 
		\\ 
		(X_1,B_1) \arrow[dr, "\theta_1" swap] \arrow[rr, dashed, "\pi_1"] && (X_2,B_2) \arrow[dl, "\theta_1^+"] \arrow[dr, "\theta_2" swap] \arrow[rr, dashed, "\pi_2"] && (X_3,B_3) \arrow[dl, "\theta_2^+"] \arrow[rr, dashed, "\pi_3"] && \dots 
		\\
		& Z_1 && Z_2
	\end{tikzcd}
\end{center}
where, for each $i \geq 1$, the map $ \rho_i \colon Y_i \dashrightarrow Y_{i+1} $ is a $(K_{Y_i}+\Delta_i)$-MMP over $ Z_i $ and the map $f_i $ is a dlt blowup of the pair $ (X_i,B_i) $. 

In particular, the sequence on top of the above diagram is an MMP for a $\Q$-factorial dlt pair $ (Y_1,\Delta_1) $.
\end{lem}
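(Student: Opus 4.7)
The plan is to proceed by induction on $i$, simultaneously producing each dlt blowup $f_i\colon(Y_i,\Delta_i)\to(X_i,B_i)$ and the MMP $\rho_i$ that lifts the quasi-flip $\pi_i$. For the base case $i=1$, I would take $f_1$ to be any dlt blowup of $(X_1,B_1)$ furnished by the dlt blowup lemma, so that $(Y_1,\Delta_1)$ is a $\Q$-factorial dlt pair satisfying $K_{Y_1}+\Delta_1\sim_{\R}f_1^*(K_{X_1}+B_1)$.

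For the inductive step, assume that $f_i\colon(Y_i,\Delta_i)\to(X_i,B_i)$ has been constructed. Since $\pi_i$ is a small ample quasi-flip over $Z_i$, the pair $(X_{i+1},B_{i+1})$ is by definition a log canonical model of $(X_i,B_i)$ over $Z_i$, and hence also a log canonical model of the crepant model $(Y_i,\Delta_i)$ over $Z_i$. I would then pick a general ample $\R$-divisor $A$ on $X_{i+1}$ over $Z_i$ such that $(X_{i+1},B_{i+1}+A)$ is log canonical and $K_{X_{i+1}}+B_{i+1}+A\sim_{\R,Z_i}0$. Via a common log resolution of the graph of the composition $\pi_i\circ f_i\colon Y_i\dashrightarrow X_{i+1}$, and using the Negativity Lemma together with the smallness of $\pi_i$ and the $\Q$-factoriality of $Y_i$, I would transfer $A$ to an effective $\R$-Cartier divisor $A'$ on $Y_i$ satisfying $K_{Y_i}+\Delta_i+A'\sim_{\R,Z_i}0$ and such that $(Y_i,\Delta_i+A')$ remains log canonical.

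I would then run the $(K_{Y_i}+\Delta_i)$-MMP over $Z_i$ with scaling of $A'$; the existence of each step is guaranteed by the existence of flips for $\Q$-factorial dlt pairs, and every intermediate model remains $\Q$-factorial dlt. Denoting the endpoint by $(Y_{i+1},\Delta_{i+1})$, it is a minimal model of $(Y_i,\Delta_i)$ over $Z_i$, so Lemma \ref{lem:maptoCM} produces a birational morphism $f_{i+1}\colon Y_{i+1}\to X_{i+1}$ with $K_{Y_{i+1}}+\Delta_{i+1}\sim_{\R}f_{i+1}^*(K_{X_{i+1}}+B_{i+1})$, which by the very definition of a dlt blowup exhibits $(Y_{i+1},\Delta_{i+1})$ as a dlt blowup of $(X_{i+1},B_{i+1})$, closing the induction and producing the desired diagram.

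The main obstacle will be ensuring the termination of the $(K_{Y_i}+\Delta_i)$-MMP with scaling of $A'$ in finitely many steps, since termination of flips for lc pairs is unknown in general. To overcome this, I would exploit that an explicit log canonical model, namely $X_{i+1}$, is available before starting the MMP: each step of the scaled MMP is simultaneously a step of the $(K_{Y_i}+\Delta_i+tA')$-MMP for suitable $t\in[0,1]$, and the availability of the log canonical model combined with standard termination results for scaled MMPs of $\Q$-factorial dlt pairs whose log canonical model is already at hand should force the scaling parameter to drop to zero after finitely many steps, yielding the finite sequence $\rho_i$.
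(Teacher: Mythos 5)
Your overall strategy — inductively construct dlt blowups $f_i$ and run, at each step, an MMP with scaling over $Z_i$ that must terminate because a log canonical model is already known — is the paper's strategy. However, you yourself flag the termination of that scaled MMP as "the main obstacle," and the argument you sketch for it is not a proof; this is exactly the crux of the lemma. The paper resolves it by a chain of precise references: since $(X_{i+1},B_{i+1})$ is a minimal (indeed log canonical) model of $(X_i,B_i)$ over $Z_i$, Remark~\ref{rem:models} shows $(X_i,B_i)$ has a minimal model in the sense of Birkar--Shokurov over $Z_i$; this property transfers to the dlt blowup $(Y_i,\Delta_i)$ by \cite[Lemma 2.15]{Hash19b}; and then \cite[Theorem 1.9(ii),(iii)]{Bir12a} gives a $(K_{Y_i}+\Delta_i)$-MMP with scaling of a \emph{general ample} divisor over $Z_i$ that terminates at a minimal model. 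You need this reference (or an equivalent statement) to close the gap — the vague appeal to "standard termination results for scaled MMPs whose log canonical model is at hand" is not a substitute, because the whole point is that termination for lc pairs without such input is open.

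There is also a technical flaw in your construction of the scaling divisor. If $W$ is a common resolution with $p\colon W\to Y_i$ and $q\colon W\to X_{i+1}$, then $p^*(K_{Y_i}+\Delta_i)\sim_\R q^*(K_{X_{i+1}}+B_{i+1})+E$ with $E\geq0$ $q$-exceptional. But $E$ need not be $p$-exceptional: the $f_i$-exceptional divisors on $Y_i$ remain exceptional over $X_{i+1}$ (since $\pi_i$ is small), so their strict transforms appear in $E$ and survive under $p_*$. Consequently $K_{Y_i}+\Delta_i+p_*q^*A\sim_{\R,Z_i}p_*E$, which is effective but generally neither zero nor nef over $Z_i$; your claimed $K_{Y_i}+\Delta_i+A'\sim_{\R,Z_i}0$ fails, and if you instead subtract $p_*E$ you lose effectiveness of $A'$. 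The paper sidesteps this entirely by scaling a general ample divisor rather than trying to transport an explicit divisor from $X_{i+1}$. Once the termination reference is in place, the rest of your induction — invoking Lemma~\ref{lem:maptoCM} to get $f_{i+1}$ and recognising $(Y_{i+1},\Delta_{i+1})$ as a dlt blowup of $(X_{i+1},B_{i+1})$ — matches the paper.
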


\begin{proof}
Let $ f_1 \colon (Y_1,\Delta_1) \to  (X_1,B_1) $ be a dlt blowup of $ (X_1,B_1) $. By Remark \ref{rem:models} the pair $ (X_1,B_1) $ has a minimal model in the sense of Birkar-Shokurov over $ Z_1 $, hence $ (Y_1,\Delta_1) $ has a minimal model in the sense of Birkar-Shokurov over $ Z_1 $ by \cite[Lemma 2.15]{Hash19b}. Therefore, by \cite[Theorem 1.9(ii),(iii)]{Bir12a} there exists a $ (K_{Y_1} + \Delta_1) $-MMP with scaling of an ample divisor over $Z_1$ which terminates with a minimal model $ (Y_2,\Delta_2) $ of $ (Y_1,\Delta_1) $ over $Z_1$. Since $(X_2,B_2)$ is a log canonical model of $(Y_1,\Delta_1)$ over $ Z_1 $, by Lemma \ref{lem:maptoCM} there exists a morphism $f_2\colon Y_2\to X_2 $ such that $ K_{Y_2} + \Delta_2 \sim_\R f_2^*(K_{X_2} + B_2) $. In particular, the pair $ (Y_2,\Delta_2) $ is a dlt blowup of $ (X_2,B_2) $. By continuing this process analogously, we obtain the required diagram.
\end{proof}

The analogue of Lemma \ref{lem:lifting} in the context of g-pairs is the following:

\begin{lem}\label{lem:lifting_g}
	Assume the existence of minimal models for smooth varieties of dimension at most $ n-1 $.
	
	Let $ (X_1,B_1+M_1) $ be a quasi-projective NQC log canonical g-pair  of dimension $ n $ which is projective over a quasi-projective variety $Z$. Consider a sequence of small ample quasi-flips over $Z$:
	\begin{center}
		\begin{tikzcd}[column sep = 0.8em, row sep = 1.75em]
			(X_1,B_1+M_1) \arrow[dr, "\theta_1" swap] \arrow[rr, dashed, "\pi_1"] && (X_2,B_2+M_2) \arrow[dl, "\theta_1^+"] \arrow[dr, "\theta_2" swap] \arrow[rr, dashed, "\pi_2"] && (X_3,B_3+M_3) \arrow[dl, "\theta_2^+"] \arrow[rr, dashed, "\pi_3"] && \dots \\
			& Z_1 && Z_2
		\end{tikzcd}
	\end{center}
	Then there exists a diagram
	\begin{center}
		\begin{tikzcd}[column sep = 0.8em, row sep = large]
			(Y_1,\Delta_1+N_1) \arrow[d, "f_1" swap] \arrow[rr, dashed, "\rho_1"] && (Y_2,\Delta_2+N_2) \arrow[d, "f_2" swap] \arrow[rr, dashed, "\rho_2"] && (Y_3,\Delta_3+N_3) \arrow[d, "f_3" swap] \arrow[rr, dashed, "\rho_3"] && \dots 
			\\ 
			(X_1,B_1+M_1) \arrow[dr, "\theta_1" swap] \arrow[rr, dashed, "\pi_1"] && (X_2,B_2+M_2) \arrow[dl, "\theta_1^+"] \arrow[dr, "\theta_2" swap] \arrow[rr, dashed, "\pi_2"] && (X_3,B_3+M_3) \arrow[dl, "\theta_2^+"] \arrow[rr, dashed, "\pi_3"] && \dots \\
			& Z_1 && Z_2
		\end{tikzcd}
	\end{center}
	where, for each $i \geq 1$, the map $ \rho_i \colon Y_i \dashrightarrow Y_{i+1} $ is a $(K_{Y_i}+\Delta_i+N_i)$-MMP over $ Z_i $ and the map $f_i $ is a dlt blowup of the g-pair $ (X_i,B_i+M_i) $. 
	
	In particular, the sequence on top of the above diagram is an MMP for an NQC $\Q$-factorial dlt g-pair $ (Y_1,\Delta_1+N_1) $.
\end{lem}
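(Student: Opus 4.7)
The plan is to imitate the proof of Lemma \ref{lem:lifting} step by step, replacing each classical ingredient by its NQC g-pair counterpart. First I would take a dlt blowup $ f_1 \colon (Y_1,\Delta_1+N_1) \to (X_1,B_1+M_1) $ of the starting g-pair, which exists by the result recalled after Remark \ref{rem:nklt_dlt_blow-up}. The next observation is that $(X_2,B_2+M_2)$ is itself a minimal model of $(X_1,B_1+M_1)$ over $Z_1$ in the sense of our definition: since $\pi_1$ is an ample small quasi-flip, it is a birational contraction which is an isomorphism in codimension one (so the strict-discrepancy-increase condition holds vacuously), $\pi_{1,*}B_1 = B_2$ and $\pi_{1,*}M_1 = M_2$, and $K_{X_2}+B_2+M_2$ is $\theta_1^+$-ample, hence nef over $Z_1$. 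By the g-pair version of Remark \ref{rem:models}, any dlt blowup of $(X_2,B_2+M_2)$ is a Birkar--Shokurov minimal model of $(X_1,B_1+M_1)$ over $Z_1$, and by the g-pair analogue of \cite[Lemma 2.15]{Hash19b} this transfers to a minimal model of the dlt blowup $(Y_1,\Delta_1+N_1)$ over $Z_1$.

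With a minimal model in hand, I would next invoke the g-pair analogue of \cite[Theorem 1.9(ii),(iii)]{Bir12a}, available in the recent literature \cite{HM18,HanLi,LT19} on the MMP for NQC g-pairs, where the hypothesis on existence of minimal models for smooth varieties of dimension at most $n-1$ is precisely what allows one to run and terminate MMPs with scaling. Applying it, I obtain a $(K_{Y_1}+\Delta_1+N_1)$-MMP with scaling of a sufficiently ample divisor over $Z_1$ which terminates with a minimal model $(Y_2,\Delta_2+N_2)$ of $(Y_1,\Delta_1+N_1)$ over $Z_1$. Since $K_{X_2}+B_2+M_2$ is ample over $Z_1$, the g-pair $(X_2,B_2+M_2)$ is a log canonical model of $(Y_1,\Delta_1+N_1)$ over $Z_1$; Lemma \ref{lem:maptoCM} then produces a birational morphism $f_2\colon Y_2\to X_2$ with $K_{Y_2}+\Delta_2+N_2\sim_\R f_2^*(K_{X_2}+B_2+M_2)$, which in particular exhibits $(Y_2,\Delta_2+N_2)$ as a dlt blowup of $(X_2,B_2+M_2)$. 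Iterating this construction at each index $i\geq 1$ yields the entire diagram, and the top row is then an MMP for the NQC $\Q$-factorial dlt g-pair $(Y_1,\Delta_1+N_1)$ by concatenating the individual MMPs $\rho_i$.

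The main obstacle is the middle step, namely obtaining termination of the $(K_{Y_1}+\Delta_1+N_1)$-MMP with scaling in dimension $n$ from the assumed existence of minimal models for smooth varieties in dimension at most $n-1$. In the classical log canonical setting this is one of the central results of \cite{Bir12a}, but in the NQC g-pair setting one must instead rely on the more delicate machinery recently developed in \cite{HM18,HanLi,LT19}, which bootstraps from the low-dimensional input to produce a minimal model of $(Y_1,\Delta_1+N_1)$ reachable by an MMP with scaling. Once this input, together with the (by-now-standard) g-pair versions of Remark \ref{rem:models} and the Hashizume-type transfer lemma, is accepted, the remainder is a formal transcription of the proof of Lemma \ref{lem:lifting}.
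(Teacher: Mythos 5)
Your overall strategy is the same as the paper's: take a dlt blowup $f_1$, observe that $(X_2,B_2+M_2)$ is a minimal model of $(X_1,B_1+M_1)$ over $Z_1$ (via Lemma \ref{lem:discrep}), produce a terminating MMP with scaling for $(Y_1,\Delta_1+N_1)$ over $Z_1$, and then use Lemma \ref{lem:maptoCM} to obtain the morphism $f_2$ and iterate. The difference — and the place where your argument is genuinely vaguer than the paper's — is the crucial middle step of producing a terminating MMP with scaling. You propose to transfer the Birkar--Shokurov minimal model of $(X_1,B_1+M_1)$ to its dlt blowup via ``the g-pair analogue of [Hash19b, Lemma 2.15]'' and then apply ``the g-pair analogue of [Bir12a, Theorem 1.9(ii),(iii)].'' Those analogues are not what the paper cites, and it is not clear they exist in the literature in the form you need. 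The paper instead routes through NQC weak Zariski decompositions: the existence of a Birkar--Shokurov minimal model of $(X_1,B_1+M_1)$ over $Z_1$ yields an NQC weak Zariski decomposition of $K_{X_1}+B_1+M_1$ over $Z_1$ by \cite[Proposition 5.1]{HanLi}; this transfers to $(Y_1,\Delta_1+N_1)$ by \cite[Remark 2.11]{LT19} (with no Hashizume-type transfer lemma needed); and then \cite[Theorem 4.4(ii)]{LT19} — whose hypotheses are phrased in terms of NQC weak Zariski decompositions, not Birkar--Shokurov models — gives the terminating $(K_{Y_1}+\Delta_1+N_1)$-MMP with scaling. You correctly flagged this step as the main obstacle and pointed to the right circle of papers, but you did not identify the specific detour through weak Zariski decompositions that makes the argument rigorous, so as written your middle step rests on unreferenced results whose existence in the g-pair setting you should not take for granted.
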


\begin{proof}
	Let $ f_1 \colon (Y_1,\Delta_1+N_1) \to  (X_1,B_1+M_1) $ be a dlt blowup of the g-pair $ (X_1,B_1+M_1) $. By the definition of a small ample quasi-flip and by Lemma \ref{lem:discrep}, the g-pair $ (X_2,B_2+M_2) $ is a minimal model of $ (X_1,B_1+M_1) $ over $ Z_1 $, hence the g-pair $ (X_1,B_1+M_1) $ has a minimal model in the sense of Birkar-Shokurov over $ Z_1 $ by Remark \ref{rem:models}. Hence, $ (X_1,B_1+M_1) $ has an NQC weak Zariski decomposition over $ Z_1 $ by \cite[Proposition 5.1]{HL22}, and it follows by \cite[Remark 2.11]{LT22} that $ (Y_1,\Delta_1+N_1) $ has an NQC weak Zariski decomposition over $ Z_1 $. Therefore, by \cite[Theorem 4.4(ii)]{LT22} there exists a $ (K_{Y_1} + \Delta_1+N_1) $-MMP with scaling of an ample divisor over $Z_1$ which terminates with a minimal model $ (Y_2,\Delta_2+N_2) $ of $ (Y_1,\Delta_1+N_1) $ over $Z_1$. Since $(X_2,B_2+M_2)$ is a log canonical model of $(Y_1,\Delta_1+N_1)$ over $ Z_1 $, by Lemma \ref{lem:maptoCM} there exists a morphism $f_2\colon Y_2\to X_2 $ such that $ K_{Y_2} + \Delta_2 + N_2 \sim_\R f_2^*(K_{X_2} + B_2 + M_2) $. In particular, the g-pair $ (Y_2,\Delta_2 + M_2) $ is a dlt blowup of the g-pair $ (X_2,B_2+M_2) $. By continuing this process analogously, we obtain the required diagram.
\end{proof}

\section{Special termination for pairs}
\label{sect:SpecialTerm}

\begin{lem}\label{lem:technical}
Let $f\colon Y\to X$ be a projective birational morphism between normal varieties. Assume that we have a diagram
	\begin{center} 
		\begin{tikzcd}[column sep = 1em, row sep = normal]
			Y \arrow[d, "f" swap] \arrow[rr, dashed, "\mu"] && W \arrow[d] \\
			X \arrow[rr, "\theta"] && Z
		\end{tikzcd}
	\end{center}
where $ \theta $ is birational and $\mu$ is an isomorphism in codimension one. Let $D_X$ be an $\R$-Cartier $\R$-divisor on $X$, set $D_Y:=f^*D_X$ and $D_W:=\mu_*D_Y$, and assume that $D_W$ is $\R$-Cartier. Let $V_X\subseteq X$ and $V_Y\subseteq Y$ be closed subsets such that $f(V_Y)=V_X$. Assume that:
\begin{enumerate}
\item[(i)] the map $\mu$ is $D_Y$-nonpositive,
\item[(ii)] $D_W$ is semiample over $Z$, 
\item[(iii)] $V_Y$ is contained in the locus in $Y$ where the map $\mu$ is an isomorphism,
\item[(iv)] the set $\Exc(\theta)$ is covered by curves $\gamma$ which are contracted by $ \theta $ and such that $D_X\cdot\gamma<0$.
\end{enumerate}
Then $\Exc(\theta)\cap V_X=\emptyset$.
\end{lem}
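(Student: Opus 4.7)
The plan is to derive $\Exc(\theta) \cap V_X = \emptyset$ by repeatedly applying Lemma \ref{lem:baselocus} to compare stable base loci of $D_X$, $D_Y$, $p^*D_Y$, and $q^*D_W$. The starting observation uses (iv): for any curve $\gamma \subseteq \Exc(\theta)$ contracted by $\theta$ with $D_X \cdot \gamma < 0$, and any effective $G \sim_{\R,Z} D_X$, we get $G \cdot \gamma = D_X \cdot \gamma < 0$, forcing $\gamma \subseteq \Supp G$; intersecting over all such $G$ yields $\gamma \subseteq \sB(D_X/Z)$, and since (iv) covers $\Exc(\theta)$ by such curves, $\Exc(\theta) \subseteq \sB(D_X/Z)$.

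Applying Lemma \ref{lem:baselocus} to the tower $Y \to X \to Z$ transfers this to $f^{-1}(\Exc(\theta)) \subseteq f^{-1}(\sB(D_X/Z)) = \sB(D_Y/Z)$. Now fix a smooth resolution of indeterminacies $(p,q)\colon W' \to Y \times W$ of $\mu$ witnessing the $D_Y$-nonpositivity in (i), so that $p^*D_Y \sim_\R q^*D_W + E$ with $E$ effective and $q$-exceptional. Since $D_W$ is semiample over $Z$ by (ii), $\sB(D_W/Z) = \emptyset$, hence $\sB(q^*D_W/Z) = \emptyset$ by Lemma \ref{lem:baselocus} applied to $W' \to W \to Z$. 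For any effective $G \in |D_W/Z|_\R$, the divisor $q^*G + E$ is an effective representative of $p^*D_Y$ over $Z$; intersecting the supports of such representatives gives $\sB(p^*D_Y/Z) \subseteq \sB(q^*D_W/Z) \cup \Supp E = \Supp E$. One more application of Lemma \ref{lem:baselocus}, this time to $W' \to Y \to Z$, rewrites the left-hand side as $p^{-1}(\sB(D_Y/Z))$, so that
\[ p^{-1}\big(f^{-1}(\Exc(\theta))\big) \subseteq p^{-1}\big(\sB(D_Y/Z)\big) = \sB(p^*D_Y/Z) \subseteq \Supp E. \]

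To conclude, note that (iii) places $V_Y$ inside the open locus $U \subseteq Y$ on which $\mu$ is an isomorphism. Choosing the resolution so that $p$ restricts to an isomorphism from $p^{-1}(U)$ onto $U$ forces $q|_{p^{-1}(U)}$ to be an isomorphism onto $\mu(U)$; hence no $q$-exceptional divisor meets $p^{-1}(U)$, and therefore $\Supp E \cap p^{-1}(V_Y) = \emptyset$. Combined with the display above, this yields $p^{-1}(V_Y) \cap p^{-1}(f^{-1}(\Exc(\theta))) = \emptyset$, and applying the surjection $p$ gives $V_Y \cap f^{-1}(\Exc(\theta)) = \emptyset$, i.e.\ $V_X \cap \Exc(\theta) = \emptyset$ since $V_X = f(V_Y)$.

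The main point that requires care is the passage $\sB(p^*D_Y/Z) \subseteq \Supp E$, which uses the specific form of $D_Y$-nonpositivity together with the fact that the error term $E$ is effective; everything else is bookkeeping with Lemma \ref{lem:baselocus}, which applies in each instance because every map in sight is birational ($\theta$ birational forces $W \to Z$ to be birational as well, since $\mu$ is an isomorphism in codimension one).
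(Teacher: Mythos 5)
Your proof is correct and follows essentially the same route as the paper: translate (iv) into a base-locus statement for $D_X$, pull it up through $f$ and then through the resolution $p$ using Lemma \ref{lem:baselocus}, compare with the $D_W$-side via the nonpositivity relation $p^*D_Y \sim_\R q^*D_W + E$, and use (ii) and (iii) to conclude. The only difference is organizational: the paper argues by contradiction and pushes a specific member $q^*G_W + E_T$ down to $Y$ (invoking the Negativity Lemma to identify its pullback), whereas you argue directly at the level of stable base loci on the resolution, which avoids the Negativity Lemma but uses the same ideas.
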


\begin{proof}
	Arguing by contradiction, assume that there exists $ x \in \Exc(\theta) \cap V_X $ and set $ F := f^{-1}(x) $. We first claim that
	\begin{equation}\label{eq:6}
		F \subseteq \sB(D_Y/Z). 
	\end{equation}
	To this end, by (iv) we may find a curve $ \gamma \subseteq \Exc(\theta) $ passing through $ x $, contracted by 
	$ \theta $ and such that $ D_X \cdot \gamma < 0 $. But then for each $H \in | D_X /Z |_\R$ we have $ H \cdot \gamma < 0 $, and thus $ x \in \gamma \subseteq \Supp H $. This implies that $ x \in \sB(D_X/Z) $, and by Lemma \ref{lem:baselocus} we infer that 
	\[ F \subseteq f^{-1} \big( \sB(D_X/Z) \big) = \sB(D_Y/Z) , \]
	as desired.
		
	Now, since $ F \cap V_Y \neq \emptyset $, from \eqref{eq:6} we obtain
	\begin{equation}\label{eq:5}
		V_Y \cap \sB(D_Y/Z) \neq \emptyset .	
	\end{equation}	 
	Define $V_W:=\mu(V_Y)$ and note that $ V_W $ is well-defined by (iii). We claim that 
	$$ V_W \cap \sB(D_W/Z)\neq\emptyset ,$$ 
	which would then contradict (ii) and finish the proof. 
	
	To this end, by (iii) there exists a resolution of indeterminacies $ (p,q) \colon T \to Y\times W $ of the map $ \mu $ such that $p$ and $q$ are isomorphisms over some neighbourhoods of $V_Y$ and $V_W$, respectively. 
	\begin{center}
		\begin{tikzcd}
			& T \arrow[dl, "p" swap] \arrow[dr, "q"] \\
			Y \arrow[rr, dashed, "\mu"] && W
		\end{tikzcd}
	\end{center}
	Then by (i) there exists an effective $q$-exceptional $\R$-divisor $E_T$ on $T$ such that 
	\[ p^* D_Y \sim_\R q^* D_W + E_T . \]
	Fix $ G_W \in |D_W /Z |_\R $ and set
	$$G_Y:=p_*(q^* G_W + E_T)\in |D_Y/Z|_\R . $$ 
	We then have 
	$$ p^* G_Y = q^* G_W + E_T $$
	by the Negativity Lemma \cite[Lemma 3.39]{KM98}. Since $ V_Y \cap \Supp G_Y \neq\emptyset $ by \eqref{eq:5}, we obtain 
	$$ \emptyset\neq p^{-1}(V_Y) \cap \Supp (p^* G_Y)=p^{-1}(V_Y)\cap \Supp(q^* G_W + E_T) ,$$ 
	hence $ p^{-1}(V_Y) \cap q^{-1}(\Supp G_W)\neq\emptyset $, as $ p^{-1}(V_Y) $ does not intersect $ \Supp E_T $ by construction. Thus, as $V_W=q\big(p^{-1}(V_Y)\big)$, we have
	$$ V_W\cap \Supp G_W\neq\emptyset ,$$
	and the claim follows.
\end{proof}

\begin{proof}[Proof of Theorem \ref{thm:main}]
	By Lemma \ref{lem:klt_lc} we may assume the termination of flips for $ \Q $-factorial dlt pairs of dimension at most $ n-1 $. By Lemma \ref{lem:lifting} there exists a diagram
	\begin{center}
		\begin{tikzcd}[column sep = 2em, row sep = large]
			(Y_1,\Delta_1) \arrow[d, "f_1" swap] \arrow[rr, dashed, "\rho_1"] && (Y_2,\Delta_2) \arrow[d, "f_2" swap] \arrow[rr, dashed, "\rho_2"] && (Y_3,\Delta_3) \arrow[d, "f_3" swap] \arrow[rr, dashed, "\rho_3"] && \dots 
			\\ 
			(X_1,B_1) \arrow[dr, "\theta_1" swap] \arrow[rr, dashed, "\pi_1"] && (X_2,B_2) \arrow[dl, "\theta_1^+"] \arrow[dr, "\theta_2" swap] \arrow[rr, dashed, "\pi_2"] && (X_3,B_3) \arrow[dl, "\theta_2^+"] \arrow[rr, dashed, "\pi_3"] && \dots 
			\\
			& Z_1 && Z_2
		\end{tikzcd}
	\end{center}
	where the sequence of rational maps $\rho_i$ is a composition of steps of a 
	$ (K_{Y_1} + \Delta_1) $-MMP. By relabelling, we may assume that this MMP is a sequence of flips, and by \cite[Theorem 4.2.1]{Fuj07a} we may assume that the flipping locus avoids the non-klt locus at each step in this MMP. We conclude by applying Lemma \ref{lem:technical} for $X=X_1$, $Y=Y_1$, $D_X=K_{X_1}+B_1$, $D_Y=K_{Y_1}+\Delta_1$, $V_X=\nklt(X_1,B_1)$ and $V_Y=\nklt(Y_1,\Delta_1)$, taking Remark \ref{rem:nklt_dlt_blow-up} into account.
\end{proof}

\section{Special termination for g-pairs}

\begin{thm}\label{thm:specterm_g-pairs}
	Assume the termination of flips for NQC $ \Q $-factorial dlt g-pairs of dimension at most $ n-1 $. 
	
	Let $ (X_1,B_1+M_1) $ be a quasi-projective NQC $ \Q $-factorial dlt g-pair of dimension $ n $, which is projective over a normal quasi-projective variety $Z$. Consider a sequence of flips over $Z$:
	\begin{center}
		\begin{tikzcd}[column sep = 0.8em, row sep = 1.75em]
			(X_1,B_1+M_1) \arrow[dr, "\theta_1" swap] \arrow[rr, dashed, "\pi_1"] && (X_2,B_2+M_2) \arrow[dl, "\theta_1^+"] \arrow[dr, "\theta_2" swap] \arrow[rr, dashed, "\pi_2"] && (X_3,B_3+M_3) \arrow[dl, "\theta_2^+"] \arrow[rr, dashed, "\pi_3"] && \dots \\
			& Z_1 && Z_2
		\end{tikzcd}
	\end{center}
	Then there exists a positive integer $N$ such that 
	\[ \Exc(\theta_i)\cap\nklt(X_i,B_i+M_i)=\emptyset \text{ for all } i\geq N . \]
\end{thm}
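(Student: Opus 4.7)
The plan is to mirror Fujino's proof of dlt special termination \cite[Theorem 4.2.1]{Fuj07a} with the g-pair difficulty from Definition \ref{dfn:difficulty} and its monotonicity in Proposition \ref{pro:monoton} replacing Shokurov's invariant. Since each $(X_i,B_i+M_i)$ is $\Q$-factorial dlt, we have $\nklt(X_i,B_i+M_i)=\Supp\lfloor B_i\rfloor$, and the theorem is equivalent to showing that eventually $\Exc(\theta_i)$ is disjoint from every log canonical centre of $(X_i,B_i+M_i)$.

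The first step is to stabilise the combinatorics. Discrepancies are non-decreasing along the sequence by Lemma \ref{lem:discrep}(ii), so the set of divisorial valuations $E$ with $a(E,X_i,B_i+M_i)=-1$ is non-increasing in $i$; it is finite by Remark \ref{rem:nklt_dlt_blow-up}, so after replacing the sequence by a tail it equals a fixed set $\{E_1,\dots,E_k\}$, and after a further tail the dimensions $d_j:=\dim c_{X_i}(E_j)$ are constant in $i$. A key consequence is that no lc centre $S_j^{(i)}:=c_{X_i}(E_j)$ can be contained in $\Exc(\theta_i)$: otherwise $\theta_i$ would fail to be an isomorphism above the generic point of $\theta_i(S_j^{(i)})=c_{Z_i}(E_j)$, and the last assertion of Lemma \ref{lem:discrep}(ii) would force $E_j$ to no longer be a log canonical centre on $X_{i+1}$, contradicting stability. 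In particular, $\pi_i$ is an isomorphism in a neighbourhood of the generic point of every $S_j^{(i)}$, so Proposition \ref{pro:monoton} applies at each of them.

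The second step is an induction on $d\in\{0,1,\dots,n-1\}$: after additional tail-passages, for every $i$ and every $j$ with $d_j\le d$ the restricted map $\pi_i|_{S_j^{(i)}}$ is an isomorphism. Taking $d=n-1$ yields $\Exc(\theta_i)\cap D=\emptyset$ for every prime component $D$ of $\lfloor B_i\rfloor$, which is the theorem. The case $d=0$ is automatic from the previous paragraph, since $\pi_i$ is already defined at the single point of $S_j^{(i)}$. For the inductive step, fix $S:=S_j^{(i)}$ of dimension $d$. Adjunction for dlt g-pairs (\cite[Proposition 2.8]{HanLi}) produces an NQC $\Q$-factorial dlt g-pair $(S,B_S+M_S)$ with $(K_X+B+M)|_S=K_S+B_S+M_S$, whose lc centres are precisely the lc centres of $(X,B+M)$ strictly contained in $S$, all of dimension $<d$. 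By the inductive hypothesis $\pi_i$ is an isomorphism in a neighbourhood of $\Supp\lfloor B_S\rfloor$, so $\pi_i|_S\colon S\dashrightarrow S^+$ is a well-defined birational step of a $(K_S+B_S+M_S)$-MMP over the normalisation of $\theta_i(S)$. By Proposition \ref{pro:monoton}, the non-negative integer $d_{b,\mu}(S,B_S+M_S)$ (finite by Lemma \ref{lem:finite}) strictly drops whenever $\pi_i|_S$ is not an isomorphism in codimension one, so after another tail every $\pi_i|_S$ is an isomorphism in codimension one; the resulting restricted sequence is then a sequence of (possibly trivial) small flips of the dlt g-pair $(S,B_S+M_S)$, and the assumed termination of flips in dimension $\le n-1$ forces it to terminate. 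Thus $\pi_i|_S$ is eventually an isomorphism, completing the induction.

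The main technical point to verify is that adjunction produces an NQC $\Q$-factorial dlt structure on $S$ meeting the hypotheses of the termination assumption; this is by \cite[\S 2.1 and \S 3.1]{HanLi}. Secondary bookkeeping issues — organising the several tail-passages simultaneously across the finitely many $E_j$ and the finitely many dimensions — are straightforward given the finiteness of the lc centres. A pleasant feature is that the observation ``no lc centre lies in $\Exc(\theta_i)$'' from the second paragraph neutralises what would otherwise be the hardest case of the induction, namely the minimal lc centres with $\lfloor B_S\rfloor=0$ (including the base case of isolated lc points), essentially for free.
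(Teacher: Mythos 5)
Your argument tracks the paper up to and including the reduction to a sequence of small ample quasi-flips $\pi_i|_{S_i}\colon (S_i,B_{S_i}+M_{S_i}) \dashrightarrow (S_{i+1},B_{S_{i+1}}+M_{S_{i+1}})$ over $T_i$ which are isomorphisms in codimension one. The preliminary steps (finiteness of lc centres, stabilisation of the set of valuations with discrepancy $-1$, the observation that no lc centre can be \emph{contained} in a flipping locus, the inductive setup, and the use of the difficulty from Proposition~\ref{pro:monoton}) all match the paper's Claim and its proof.

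The gap is in the last step, where you write that ``the resulting restricted sequence is then a sequence of (possibly trivial) small flips of the dlt g-pair $(S,B_S+M_S)$, and the assumed termination of flips in dimension $\le n-1$ forces it to terminate.'' This cannot be invoked directly. First, the adjoint g-pair $(S_i,B_{S_i}+M_{S_i})$ supplied by \cite[Proposition 2.8]{HanLi} is NQC dlt but is \emph{not} asserted to be $\Q$-factorial, whereas the termination hypothesis is for $\Q$-factorial dlt g-pairs; $\Q$-factoriality of $X_i$ does not descend to the lc centre $S_i$. Second, the diagrams
\[ S_i \xrightarrow{\ \theta_i|_{S_i}\ } T_i \xleftarrow{\ \theta_i^+|_{S_{i+1}}\ } S_{i+1} \]
are merely small ample quasi-flips in the sense of the paper's definition: the contractions $\theta_i|_{S_i}$ and $\theta_i^+|_{S_{i+1}}$ need not have relative Picard number one and need not be small (the flipping locus of $\theta_i$, while of codimension $\geq 2$ in $X_i$, can meet $S_i$ in a divisor). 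So the restricted sequence is not a sequence of flips to which the hypothesis applies. The paper resolves exactly this issue with Lemma~\ref{lem:lifting_g}: it lifts the sequence of quasi-flips on the $S_i$ to an MMP of a genuinely NQC $\Q$-factorial dlt g-pair $(W_1,\Delta_1+N_1)$ via dlt blowups, applies the termination hypothesis to the lifted sequence, and then uses the numerical triviality of $K_{W_i}+\Delta_i+N_i$ over $T_i$ to conclude that each $\theta_i|_{S_i}$ and $\theta_i^+|_{S_{i+1}}$ contracts no curves, hence $\pi_i|_{S_i}$ is an isomorphism. You need to insert this lifting step (or an equivalent device) before you can invoke the termination assumption.
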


\begin{proof}
	We follow closely the proofs of \cite[Theorem 4.5]{HL22} and \cite[Theorem 4.2.1]{Fuj07a}. We prove by induction on $d$ the following claim, and at the end of the proof we show how it implies the theorem.
	
	\medskip 
	
	\emph{Claim.} For each nonnegative integer $d$ there exists a positive integer $N_d$ such that the restriction of $\theta_i$ to each log canonical centre of dimension at most $d$ is an isomorphism for all $i\geq N_d$.
	
	\medskip
	
	To prove the Claim, recall first that the number of log canonical centres of any log canonical g-pair is finite. At step $i$ of the MMP as above, if a log canonical centre of $(X_i,B_i+M_i)$ belongs to $\Exc(\theta_i)$, then the number of log canonical centres of $(X_{i+1},B_{i+1}+M_{i+1})$ is smaller than the number of log canonical centres of $(X_i,B_i+M_i)$ by Lemma \ref{lem:discrep}. 
	
	Thus, there exists a positive integer $N_0$ such that the set $\Exc(\theta_i)$ does not contain any log canonical centre of $(X_i,B_i+M_i)$ for $i\geq N_0$. By relabelling, we may assume that $N_0=1$. In particular, this proves the Claim for $d=0$.
		
	Therefore, we may assume that for each $i\geq1$, the map $\pi_i$ is an isomorphism at the generic point of each log canonical centre of $(X_i,B_i+M_i)$.
	
	Let $d$ be a positive integer. By induction and by relabelling, we may assume that each map $ \pi_i $ is an isomorphism along every log canonical centre of dimension at most $ d -1 $. 
	
	Now, we consider a log canonical centre $ S_1 $ of $ (X_1,B_1+M_1) $ of dimension $ d $. We obtain a sequence of birational maps $\pi_i|_{S_i}\colon S_i\dashrightarrow S_{i+1}$, where $ S_i $ is the strict transform of $ S_1 $ on $ X_i $. Every log canonical centre of the NQC dlt g-pair $ (S_i,B_{S_i} + M_{S_i}) $ is a log canonical centre of $(X_i,B_i + M_i)$, and hence by induction, each map $\pi_i$ is an isomorphism along $\Supp \lfloor B_{S_i}\rfloor$. Then by Proposition \ref{pro:monoton} and since the difficulty takes values in $\N$, after relabelling the indices we may assume that $ S_i $ and $ S_{i+1} $ are isomorphic in codimension $1$ for every $ i $. 
	
	Moreover, by relabelling the indices we may assume that $(\pi_i|_{S_i})_*B_{S_i}=B_{S_{i+1}}$: indeed, this is equivalent to saying that we have 
	\begin{equation}\label{eq:79}
	a(E,S_i,B_{S_i} +M_{S_i}) = a(E,S_{i+1},B_{S_{i+1}}+M_{S_{i+1}})	
	\end{equation}
	for each component of $B_{S_i}$ and $B_{S_{i+1}}$. Since $\pi_i$ is an isomorphism along $\Supp \lfloor B_{S_i}\rfloor$, the equation \eqref{eq:79} is clear if $E\subseteq \Supp \lfloor B_{S_i}\rfloor$. Note that in general we have $a(E,S_i,B_{S_i} +M_{S_i}) \leq a(E,S_{i+1},B_{S_{i+1}}+M_{S_{i+1}})$ by Lemma \ref{lem:discrep}. If $E\nsubseteq \Supp \lfloor B_{S_i}\rfloor$, then by Lemma \ref{lem:finite} there exists a finite subset $\Gamma\subseteq\mathcal S(B,\mu)$, which is independent of the index $i$, such that $a(E,S_i,B_{S_i} +M_{S_i})\in\Gamma$. Therefore, after relabelling the indices we may assume that \eqref{eq:79} holds.
		
	For every $ i \geq 1 $ we denote by $ T_i $ the normalisation of $ \theta_i(S_i) $. By Lemma \ref{lem:lifting_g} there exists a diagram
	\begin{center}
		\begin{tikzcd}[column sep = 0.65em, row sep = 2.5em]
			(W_1,\Delta_1+N_1) \arrow[d, "f_1" swap] \arrow[rr, dashed, "\rho_1"] && (W_2,\Delta_2+N_2) \arrow[d, "f_2" swap] \arrow[rr, dashed, "\rho_2"] && (W_3,\Delta_3+N_3) \arrow[d, "f_3" swap] \arrow[rr, dashed, "\rho_3"] && \dots 
			\\ 
			(S_1,B_{S_1}+M_{S_1}) \arrow[dr, "\theta_1|_{S_1}" swap] \arrow[rr, dashed, "\pi_1|_{S_1}"] && (S_2,B_{S_2}+M_{S_2}) \arrow[dl, "\theta_1^+|_{S_2}"] \arrow[dr, "\theta_2|_{S_2}" swap] \arrow[rr, dashed, "\pi_2|_{S_2}"] && (S_3,B_{S_3}+M_{S_3}) \arrow[dl, "\theta_2^+|_{S_3}"] \arrow[rr, dashed] && \dots 
			\\
			& T_1 && T_2
		\end{tikzcd}
	\end{center}
	where the sequence of rational maps $\rho_i$ yields an MMP for the NQC $ \Q $-factorial dlt g-pair $ (W_1, \Delta_1+N_1) $. By the assumptions of the theorem, this MMP terminates, so by relabelling, we may assume that 
	$$ (W_i,\Delta_i+N_i) = (W_{i+1}, \Delta_{i+1} + N_{i+1}) \quad \text{for all } i\geq1 .$$
	Since $-(K_{W_i}+\Delta_i+N_i)$ is nef over $T_i$ and $K_{W_{i+1}}+\Delta_{i+1}+N_{i+1}$ is nef over $T_i$ by construction, we obtain that $K_{W_i}+\Delta_i+N_i$ is numerically trivial over $T_i$ for each $i$. In particular, $K_{S_i}+B_{S_i}+M_{S_i}$ and $K_{S_{i+1}}+B_{S_{i+1}}+M_{S_{i+1}}$ are numerically trivial over $T_i$ for each $i$, and thus $\theta_i |_{S_i}$ and $\theta_i^+ |_{S_{i+1}}$ contract no curves. Therefore, $\theta_i |_{S_i}$ and $\theta_i^+ |_{S_{i+1}}$ are isomorphisms, and consequently all maps $\pi_i|_{S_i}$ are isomorphisms. This finishes the proof of the Claim.
	
	Finally, we show that the Claim implies the Theorem: indeed, the Claim shows that $\lfloor B_i\rfloor$ does not contain any flipping or flipped curves for all $i\geq N_{n-1}$. Thus, if $\Exc(\theta_i)\cap\lfloor B_i\rfloor\neq\emptyset$ for some $i\geq N_{n-1}$, then there is a curve $C$ contracted by $\theta_i$ with $C\cdot\lfloor B_i\rfloor > 0$. But then $C^+\cdot\lfloor B_{i+1}\rfloor < 0$ for every curve $C^+$ contracted by $\theta_i^+$, hence $C^+\subseteq\lfloor B_{i+1}\rfloor$, a contradiction.
\end{proof}

The analogue of Lemma \ref{lem:klt_lc} in the context of g-pairs is the following: 

\begin{lem} \label{lem:reduction_g-term}
	The termination of flips for quasi-projective NQC $\Q$-factorial klt g-pairs of dimension at most $ d $ which are projective over a normal quasi-projective variety $Z$ implies the termination of flips for quasi-projective NQC log canonical g-pairs of dimension $ d $ over $Z$.
\end{lem}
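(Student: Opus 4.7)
The strategy parallels the reduction used for ordinary pairs (Lemma \ref{lem:klt_lc}), with Theorem \ref{thm:specterm_g-pairs} playing the role of special termination. Given an infinite sequence of flips over $Z$
	\begin{center}
		\begin{tikzcd}[column sep = 0.8em, row sep = 1.75em]
			(X_1,B_1+M_1) \arrow[dr, "\theta_1" swap] \arrow[rr, dashed, "\pi_1"] && (X_2,B_2+M_2) \arrow[dl, "\theta_1^+"] \arrow[rr, dashed, "\pi_2"] && \dots \\
			& Z_1 &
		\end{tikzcd}
	\end{center}
for an NQC log canonical g-pair $(X_1,B_1+M_1)$ of dimension $d$, I would first invoke Lemma \ref{lem:lifting_g} to lift it to a diagram whose top row is an MMP for an NQC $\Q$-factorial dlt g-pair $(Y_1,\Delta_1+N_1)$, with each $(Y_i,\Delta_i+N_i)$ a dlt blowup of $(X_i,B_i+M_i)$. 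It suffices to show this MMP on $Y$ terminates, because each non-trivial flip $\pi_i$ forces a non-trivial $\rho_i$ (the two dlt blowups of non-isomorphic log canonical models cannot coincide).

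Next, by relabelling, I would assume the MMP on $Y$ consists solely of flips (discarding the finitely many divisorial contractions at the start). Then I would apply Theorem \ref{thm:specterm_g-pairs} to the dlt g-pair $(Y_1,\Delta_1+N_1)$: after another relabelling, the flipping locus of each step is disjoint from $\nklt(Y_i,\Delta_i+N_i)=\Supp\lfloor\Delta_i\rfloor$.

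The core step is then a perturbation argument. Choose $\varepsilon>0$ small enough so that $(Y_1,\Delta_1-\varepsilon\lfloor\Delta_1\rfloor+N_1)$ is an NQC $\Q$-factorial klt g-pair. For each $i$, let $C\subseteq Y_i$ be a flipping curve of $\rho_i$. Since $C\subseteq\Exc$ is disjoint from $\Supp\lfloor\Delta_i\rfloor$, we have $\lfloor\Delta_i\rfloor\cdot C=0$, and hence
\[
(K_{Y_i}+\Delta_i-\varepsilon\lfloor\Delta_i\rfloor+N_i)\cdot C=(K_{Y_i}+\Delta_i+N_i)\cdot C<0,
\]
with the analogous positivity on the flipped side. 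Since flips are isomorphisms in codimension one and preserve the NQC property of $N_i$, the same sequence of maps is an MMP for the klt g-pair $(Y_1,\Delta_1-\varepsilon\lfloor\Delta_1\rfloor+N_1)$, whose dimension is at most $d$. By the assumed termination of flips for NQC $\Q$-factorial klt g-pairs in dimension $\leq d$, this MMP terminates, and with it the original sequence on $X$.

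The step I expect to require the most care is the perturbation: one must check that the strict transform of $\Delta_1-\varepsilon\lfloor\Delta_1\rfloor$ on $Y_i$ agrees with $\Delta_i-\varepsilon\lfloor\Delta_i\rfloor$ (which is immediate from the smallness of each flip and the fact that no component of $\lfloor\Delta_i\rfloor$ is contracted along the way, guaranteed by special termination), and that the nef $\Q$-Cartier combinations witnessing the NQC property of $N_1$ pass through the flips in a compatible manner, so that the perturbed g-pair structure persists throughout. Both are routine but essential for the reduction to go through.
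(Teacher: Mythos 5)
Your proof follows the paper's strategy closely and relies on the same two ingredients (Lemma \ref{lem:lifting_g} and Theorem \ref{thm:specterm_g-pairs}); the $\varepsilon$-perturbation you use is a valid, if slightly over-cautious, variant of the paper's move, which simply takes $\varepsilon=1$, i.e.\ passes to the klt g-pair $\big(Y_1,(\Delta_1-\lfloor\Delta_1\rfloor)+N_1\big)$. Since a dlt g-pair with trivial round-down is automatically klt, there is no need for ``$\varepsilon$ small enough'' --- any $\varepsilon\in(0,1]$ does the job, and the paper's choice $\varepsilon=1$ is the cleanest. Your concluding remark that the perturbed boundary transforms correctly under small maps, and that the nef part is unaffected, is correct and matches the paper's implicit reasoning.

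There is, however, a genuine omission. Theorem \ref{thm:specterm_g-pairs} is not free: its hypothesis is termination of flips for NQC $\Q$-factorial \emph{dlt} g-pairs of dimension at most $d-1$, which is not literally contained in the hypothesis of Lemma \ref{lem:reduction_g-term} (termination for NQC $\Q$-factorial \emph{klt} g-pairs of dimension at most $d$). To pass from the latter to the former one needs an induction on $d$, applying the very lemma being proved in dimension $d-1$ --- and indeed the paper opens its proof with exactly this step: ``By induction, we may assume the termination of flips for NQC $\Q$-factorial dlt g-pairs of dimension at most $d-1$.'' Your proposal invokes Theorem \ref{thm:specterm_g-pairs} directly, without noting that its hypothesis must first be established; as written the argument is circular at the point where special termination is used, and the inductive framework needs to be made explicit.
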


\begin{proof}
	By induction, we may assume the termination of flips for NQC $ \Q $-factorial dlt g-pairs of dimension at most $ d-1 $. 

	Consider a sequence of flips starting from an NQC log canonical g-pair 
	$ (X_1,B_1+M_1) $ of dimension $ d $:
	\begin{center}
		\begin{tikzcd}[column sep = 0.8em, row sep = 1.75em]
			(X_1,B_1+M_1) \arrow[dr, "\theta_1" swap] \arrow[rr, dashed, "\pi_1"] && (X_2,B_2+M_2) \arrow[dl, "\theta_1^+"] \arrow[dr, "\theta_2" swap] \arrow[rr, dashed, "\pi_2"] && (X_3,B_3+M_3) \arrow[dl, "\theta_2^+"] \arrow[rr, dashed, "\pi_3"] && \dots \\
			& Z_1 && Z_2
		\end{tikzcd}
	\end{center}
	By Lemma \ref{lem:lifting_g} there exists a diagram
	\begin{center}
		\begin{tikzcd}[column sep = 0.8em, row sep = large]
			(Y_1,\Delta_1+N_1) \arrow[d, "f_1" swap] \arrow[rr, dashed, "\rho_1"] && (Y_2,\Delta_2+N_2) \arrow[d, "f_2" swap] \arrow[rr, dashed, "\rho_2"] && (Y_3,\Delta_3+N_3) \arrow[d, "f_3" swap] \arrow[rr, dashed, "\rho_3"] && \dots 
			\\ 
			(X_1,B_1+M_1) \arrow[dr, "\theta_1" swap] \arrow[rr, dashed, "\pi_1"] && (X_2,B_2+M_2) \arrow[dl, "\theta_1^+"] \arrow[dr, "\theta_2" swap] \arrow[rr, dashed, "\pi_2"] && (X_3,B_3+M_3) \arrow[dl, "\theta_2^+"] \arrow[rr, dashed, "\pi_3"] && \dots \\
			& Z_1 && Z_2
		\end{tikzcd}
	\end{center}
	where the sequence of rational maps $\rho_i$ is a composition of steps in an MMP for an NQC $\Q$-factorial dlt g-pair $ (Y_1,\Delta_1 + N_1) $. It suffices to show that this MMP terminates; we may assume that this sequence consists only of flips. By Theorem \ref{thm:specterm_g-pairs} and by relabelling, we may also assume that in this sequence the flipping locus at each step avoids the non-klt locus. Consequently, this sequence of flips is also a sequence of flips for the NQC $\Q$-factorial klt g-pair $ \big(Y_1,(\Delta_1 - \lfloor \Delta_1 \rfloor) +N_1 \big) $, which terminates by assumption.
\end{proof}

Finally, we obtain the analogue of Theorem \ref{thm:main} in the context of g-pairs.

\begin{proof}[Proof of Theorem \ref{thm:main_g}]
The proof is analogous to that of Theorem \ref{thm:main}, by replacing Lemma \ref{lem:klt_lc} by Lemma \ref{lem:reduction_g-term}, Lemma \ref{lem:lifting} by Lemma \ref{lem:lifting_g}, and \cite[Theorem 4.2.1]{Fuj07a} by Theorem \ref{thm:specterm_g-pairs}.
\end{proof}
 
	\bibliographystyle{amsalpha}
	\bibliography{biblio}
	
\end{document}